

\documentclass[12pt,reqno]{article}


\usepackage[usenames]{color}
\usepackage{amssymb}
\usepackage{graphicx}
\usepackage{amscd}
\usepackage{float}

\usepackage[colorlinks=true,
linkcolor=webgreen,
filecolor=webbrown,
citecolor=webgreen]{hyperref}

\definecolor{webgreen}{rgb}{0,.5,0}
\definecolor{webbrown}{rgb}{.6,0,0}

\usepackage{color}
\usepackage{float}

\usepackage{amsmath,amssymb}
\usepackage{amsthm}
\usepackage{amsfonts}
\usepackage{latexsym}
\usepackage{epsf}

\setlength{\textwidth}{6.5in}
\setlength{\oddsidemargin}{.1in}
\setlength{\evensidemargin}{.1in}
\setlength{\topmargin}{-.5in}
\setlength{\textheight}{8.9in}

\newcommand{\seqnum}[1]{\href{http://oeis.org/#1}{\underline{#1}}}

\newtheorem{problem}{Problem}

\def\hfl#1#2#3{\smash{\mathop{\hbox to#3{\rightarrowfill}}\limits
^{\scriptstyle#1}_{\scriptstyle#2}}}

\def\vfl#1#2#3{\llap{$\scriptstyle #1$}
\left\downarrow\vbox to#3{}\right.\rlap{$\scriptstyle #2$}}

\begin{document}
\begin{center}
\epsfxsize=4in
\end{center}

\rightline{May 27, 2011}

\begin{center}
\vskip 1cm{\LARGE\bf Enumeration of Standard Puzzles}
\vskip 1cm
\large
Guo-Niu Han\\
I.R.M.A., UMR 7501\\
Universit\'e de Strasbourg et CNRS\\
7 rue Ren\'e-Descartes, 67084 Strasbourg, France\\
\href{mailto:guoniu.han@unistra.fr}{guoniu.han@unistra.fr}\\
\end{center}

\vskip .2 in

\begin{abstract}
We introduce a large family of combinatorial 
objects, called standard puzzles, defined by very simple rules.
We focus on the standard puzzles for which the enumeration problems
can be solved by explicit formulas or by classical numbers,
such as binomial coefficients, Fibonacci numbers, tangent numbers, 
Catalan numbers,	$\ldots$
\end{abstract}


\theoremstyle{plain}
\newtheorem{theorem}{Theorem}
\newtheorem{corollary}[theorem]{Corollary}
\newtheorem{lemma}[theorem]{Lemma}
\newtheorem{proposition}[theorem]{Proposition}

\theoremstyle{definition}
\newtheorem{definition}[theorem]{Definition}
\newtheorem{example}[theorem]{Example}
\newtheorem{conjecture}[theorem]{Conjecture}

\theoremstyle{remark}
\newtheorem{remark}[theorem]{Remark}

\newcommand{\NN}{\mathbb{N}}
\newcommand{\ZZ}{\mathbb{Z}}
\newcommand{\QQ}{\mathbb{Q}}
\newcommand{\im}{\mbox{\upshape Im}}

\newcommand{\stirlings}[2]{\genfrac\{\}{0pt}{}{#1}{#2}}
\newcommand{\stirlingf}[2]{\genfrac[]{0pt}{}{#1}{#2}}

\newtheorem{Theorem}{Theorem}[section]
\newtheorem{Proposition}[Theorem]{Proposition}
\newtheorem{Corollary}[Theorem]{Corollary}
\theoremstyle{definition}
\newtheorem{Example}[Theorem]{Example}
\newtheorem{Remark}[Theorem]{Remark}
\newtheorem{Problem}[Theorem]{Problem}

\newcommand{\piece}[4]{\bigl[\mbox{\scriptsize ${\begin{matrix}#4\ #3\\ #1\ #2\\ \end{matrix}}$}\bigr]}
\newcommand{\smatrix}[1]{\mbox{\scriptsize $\begin{matrix}#1 \end{matrix}$}}
\newcommand{\setP}{\mathcal{P}}
\newcommand{\setpieces}[2]{\bigl\{#1\bigr\}^{#2}}
\newcommand{\cardpieces}[2]{\bigl|#1\bigr|^{#2}}

\section{Introduction} 
We introduce a large family of combinatorial objects, 
called {\it standard puzzles}, defined by very simple rules,
and study their enumeration problems. The topic of the paper may be classified
as belonging to {\it Enumerative Combinatorics}, since several of those 
standard puzzles 
can be solved by explicit formulas or by using classical numbers,
such as binomial coefficients, Fibonacci numbers, tangent numbers, 
Catalan numbers, $\ldots$

\vspace{\medskipamount}

The general definition of a standard puzzle is inspired by the following
classical topics in Enumerative Combinatorics:
(1) polyominoes \cite{Wiki_Polyomino} ;
(2) standard Young tableaux \cite{Wiki_YoungTab} ;
(3) permutation patterns \cite{Wiki_Pattern} ; 
(4) doubloons, which were introduced recently \cite{GZ,FHa, FHb}.

\vspace{\medskipamount}

We do not pretend to establish a general principle
that will make the enumeration of
all those standard puzzles possible. 
We will only provide a large list of standard puzzles with their 
first values, and the OEIS outputs whenever the sequence is already listed in 
the On-Line Encyclopedia of Integer Sequences \cite{Sloane}.  
Up to order 4 there are 114 sequences identified in OEIS,
meaning that those sequences have already been found in various analytical, 
arithmetical or combinatorial contexts and can also be derived in our 
puzzle model,
but 1339 are still unknown,
that is, do have puzzle descriptions, 
but have not been encountered in other contexts.
\section{Definition of standard puzzle} 
A {\it piece} is a square having four numbers, called {\it labels},
written on its corners.
A {\it puzzle} is a connected arrangement of pieces in the $\ZZ\times\ZZ$-plane
such that the joining corners of all the pieces have the same labels (see Fig. \ref{fig.puzzle}).
Pieces and puzzles can be translated, but can be neither rotated, 
nor reflected.
The {\it shape} of each puzzle is a polyomino obtained from the puzzle
by removing the labels.
%
\long\def\pythonbegin#1\pythonend{}
\pythonbegin
beginfig(1, "1.8mm");  
setLegoUnit([6,6])
#u_showgrid([0,0], [8,8])  # show grid
r=0.18 # dot radius

def DotGrayLabel(pt, radius, lab):
	circle(pt, radius)
	label(pt, "{$\scriptstyle"+str(lab) + "$}")

def AddPath(p, dir):
	p.append([p[len(p)-1][0]+ dir[0], p[len(p)-1][1]+dir[1]])

def DrawSquare(p, labs):
	pa=[p]
	(N,S,E,O)=([0,1], [0,-1], [1,0], [-1,0])
	[AddPath(pa, z) for z in [E,N,O,S]]
	[DotGrayLabel(pa[z], r, labs[z]) for z in range(len(pa)-1) if labs[z]>0] 
	print "pickup pencircle scaled 0.6 pt;"
	[arc(pa[z], pa[z+1], marge=2*r, h=0) for z in range(len(pa)-1)]

DrawSquare([4.1,3], [2,9, 3,4])

p1=",".join([str(z) + "u" for z in	lego_pt([4.5, 2.5])])
p2=",".join([str(z) + "u" for z in	lego_pt([2.5, 1.5])])
print "HIArrowArc((

DrawSquare([0,2], [0,0,7,6])
DrawSquare([0,1], [0,0,5,8])
DrawSquare([1,1], [0,0,4,0])
DrawSquare([-1,0], [3,0,0,9])
DrawSquare([0,0], [5,3,7,4])
DrawSquare([1,0], [0,9,2,0])
DrawSquare([2,0], [0,1,9,0])
DrawSquare([3,0], [0,2,2,0])
DrawSquare([0,-1],[4,2,0,0])

draw_options("withpen pencircle scaled 0.4pt dashed evenly scaled 1")
arc([2,2], [3,2], marge=2*r, h=0)
arc([3,1], [3,2], marge=2*r, h=0)
line([2.9,2], [3,2])
line([3,1.9], [3,2])

endfig();
\pythonend
\begin{figure}[ht]
\centering
\begin{center}
\includegraphics{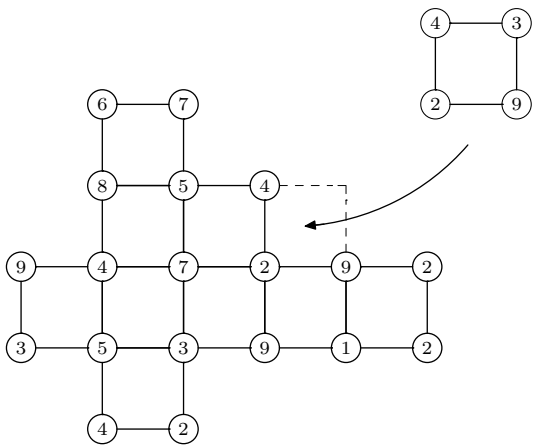}
\end{center}
\vspace{-12pt}
\caption{Adding a piece to a puzzle}
\label{fig.puzzle}
\end{figure}

A {\it standard puzzle} of shape $\lambda$ is a puzzle 
such that the multi-set of all its labels is simply 
$\{1,2,\ldots, m\}$. This implies that $m$ is the number of vertices of 
the polyomino $\lambda$. 
In particular, the four labels of a piece occurring in a standard puzzle are all distinct.
Replacing the four labels by $\{1,2,3,4\}$ respecting the label ordering
yields a {\it standard piece}. This operation is called a 
{\it reduction} and will be donoted by $\Omega$
(see Fig. \ref{fig.reduction}). Two pieces are said to be {\it identical} if 
they have the same reductions.
In the rest of the paper, puzzle means standard puzzle and piece means
standard piece.  We then have only twenty-four different pieces which are listed and coded in 
Table \ref{tab.codes}.  Note that the letters ``I" and  ``O" have not even used.
%
%
\long\def\pythonbegin#1\pythonend{}
\pythonbegin
beginfig(2, "1.8mm");  
setLegoUnit([6,6])
#u_showgrid([0,0], [8,8])  # show grid
r=0.2  # dot radius

DrawSquare([1,0], [15,3,11,4])
label([3.5,0.9], "$\Omega$")
label([3.5,0.5], "$\longmapsto$")
DrawSquare([4.7,0], [4,1,3,2])

endfig();
\pythonend
\begin{figure}[ht]
\begin{center}
\includegraphics{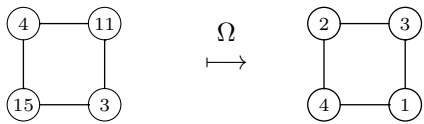}
\end{center}
\vspace{-12pt}
\caption{Reduction of a piece}
\label{fig.reduction}
\end{figure}
\begin{table}[ht]
$$
\begin{matrix}
&A =\piece 1234 &B =\piece 1243 &C =\piece 1324 &D =\piece 1342 &E =\piece 1423 &F =\piece 1432\\
\noalign{\vspace{\smallskipamount}
}
&G =\piece 2134 &H =\piece 2143 &J =\piece 2314 &K =\piece 2341 &L =\piece 2413 &M =\piece 2431\\
\noalign{\vspace{\smallskipamount}
}
&N =\piece 3124 &P =\piece 3142 &Q =\piece 3214 &R =\piece 3241 &S =\piece 3412 &T =\piece 3421\\
\noalign{\vspace{\smallskipamount}
}
&U =\piece 4123 &V =\piece 4132 &W =\piece 4213 &X =\piece 4231 &Y =\piece 4312 &Z =\piece 4321\\
\end{matrix}
$$
\caption{The twenty-four pieces and its codes}
\label{tab.codes}
\end{table}

Given a standard puzzle $\alpha$, a set $\setP$ of standard pieces is called
{\it support} of the puzzle~$\alpha$ if the reduction of each piece
occurring in the puzzle $\alpha$ is an element of $\setP$. Hence, the minimal
support of $\alpha$ is the set of all different pieces of $\alpha$ 
after reduction. For example,
the standard puzzle given in Fig. \ref{fig.standard.puzzle} 
contains seven pieces, but only four of them are different
$\piece 4132, \piece 1423, \piece 3241, \piece 4123$. 
The minimal support is the set of those four pieces and
each set of pieces
containing those four pieces is a support of the puzzle.
The main problem discussed in the paper is the following.
%
\long\def\pythonbegin#1\pythonend{}
\pythonbegin
beginfig(3, "1.8mm");  
setLegoUnit([6,6])
#u_showgrid([0,0], [8,8])  # show grid
r=0.2  # dot radius

DrawSquare([0,2], [0,0,7,6])
DrawSquare([0,1], [0,0,5,8])
DrawSquare([1,1], [0,0,14,0])
DrawSquare([0,0], [15,3,11,4])
DrawSquare([1,0], [0,12,10,0])
DrawSquare([2,0], [0,1,9,0])
DrawSquare([3,0], [0,13,2,0])

endfig();
\pythonend
\begin{figure}[ht]
\centering
\begin{center}
\includegraphics{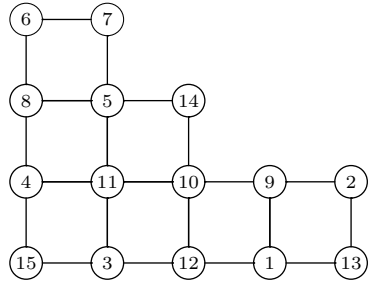}
\end{center}
\vspace{-12pt}
\caption{A standard puzzle}
\label{fig.standard.puzzle}
\end{figure}

\begin{problem}
\label{problem}
Given a set of pieces $\setP$ and a polyomino $\lambda$, 
count the number of standard puzzles of shape $\lambda$ whose support is equal  
to $\setP$. 
\end{problem}
\vspace{\medskipamount}

If the support is
$\bigl\{ \piece 1243, \piece 1342 \bigr\}$ and the shape is 
a partition, then a standard puzzle is just a standard Young 
tableau \cite{Wiki_YoungTab}. Hence, the number of standard puzzles is the number of
standard Young tableaux of fixed shape (See Fig.~\ref{fig.BD} in Section \ref{sec.example}). Problem \ref{problem} 
can be solved by using the famous hook length formula \cite{Hook}.

However, the problem is very hard to solve in general. In the rest of the paper
we focus our attention to a very special shape, namely, the 
$2\times n$ matrix.
For convenience puzzles of shape $2\times n$ will be represented by two-row matrices. For example $\bigl[\smatrix{5784\cr1236\cr}\bigr]$ stands for
%
\long\def\pythonbegin#1\pythonend{}
\pythonbegin
beginfig(99, "1.8mm");  
setLegoUnit([6,6])
#u_showgrid([0,0], [8,8])  # show grid
r=0.2  # dot radius

DrawSquare([0,0], [1,2,7,5])
DrawSquare([1,0], [0,3,8,0])
DrawSquare([2,0], [0,6,4,0])

endfig();
\pythonend
\begin{figure}[H]
\centering
\begin{center}
\includegraphics{\jobname99.eps}
\end{center}
\end{figure}
\vspace{-14pt}
\noindent
In this case the set of all puzzles made by using the support $\setP$ is denoted
by $\setP^n$. For example, take 
$\setP=BC=\{B,C\}=\bigl\{\piece 1243, \piece 1324\bigr\}$.
Then
\begin{eqnarray*}
BC^2 &= &
\bigl\{\bigl[\smatrix{34\cr12\cr}\bigr],\bigl[\smatrix{42\cr13\cr}\bigr]\bigr\}\cr
BC^3 &=  &
\bigl\{\bigl[\smatrix{563\cr124\cr}\bigr],\bigl[\smatrix{463\cr125\cr}\bigr],\bigl[\smatrix{456\cr123\cr}\bigr],\bigl[\smatrix{364\cr125\cr}\bigr],\bigl[\smatrix{356\cr124\cr}\bigr]\bigr\}\cr
BC^4 &= &
\bigl\{
\bigl[\smatrix{6784\cr1235\cr}\bigr],
	\bigl[\smatrix{5784\cr1236\cr}\bigr],
	\bigl[\smatrix{5684\cr1237\cr}\bigr],
	\bigl[\smatrix{5678\cr1234\cr}\bigr],
	\bigl[\smatrix{4785\cr1236\cr}\bigr],
	\bigl[\smatrix{4685\cr1237\cr}\bigr],
\bigl[\smatrix{4678\cr1235\cr}\bigr],\cr
& &\quad \bigl[\smatrix{4586\cr1237\cr}\bigr],
\bigl[\smatrix{4578\cr1236\cr}\bigr],
\bigl[\smatrix{3785\cr1246\cr}\bigr],
\bigl[\smatrix{3685\cr1247\cr}\bigr],
\bigl[\smatrix{3678\cr1245\cr}\bigr],
\bigl[\smatrix{3586\cr1247\cr}\bigr],
\bigl[\smatrix{3578\cr1246\cr}\bigr]\bigr
\}\cr
\end{eqnarray*}
The sequence  $(|BC^n|)_n=(|BC^2|, |BC^3|, |BC^4|, \ldots)$, equal to 
$$(2, 5, 14, 42, 132, 429, 1430, 4862, 16796, 58786, 208012,
\ldots ), $$
is the well-known sequence of the Catalan numbers. 
However, many sequences defined by other supports 
are not identified in OEIS. For example,
with $\setP=CK=\bigl\{\piece 1324, \piece 2314\bigr\}$ we have
\begin{eqnarray*}
CK^2&=&\bigl\{\bigl[\smatrix{42\cr13\cr}\bigr],\bigl[\smatrix{14\cr23\cr}\bigr]\bigr\}\cr
CK^3&=&
\bigl\{\bigl[\smatrix{625\cr134\cr}\bigr],\bigl[\smatrix{526\cr134\cr}\bigr],\bigl[\smatrix{426\cr135\cr}\bigr],\bigl[\smatrix{164\cr235\cr}\bigr]\bigr\}\cr
CK^4&=&
\bigl\{\bigl[\smatrix{8275\cr1346\cr}\bigr],\bigl[\smatrix{7285\cr1346\cr}\bigr],\bigl[\smatrix{6285\cr1347\cr}\bigr],\bigl[\smatrix{5286\cr1347\cr}\bigr],\bigl[\smatrix{4286\cr1357\cr}\bigr],\bigl[\smatrix{1847\cr2356\cr}\bigr],\bigl[\smatrix{1748\cr2356\cr}\bigr],\bigl[\smatrix{1648\cr2357\cr}\bigr]\bigr\}\cr
\end{eqnarray*}
The sequence $(|CK^n|)_n$ is
$$(2, 4, 8, 26, 66, 276, 816, 4050,   13410, 75780, 274680  \ldots )$$
which is not in OEIS.

\section{The two-line matrix shape} 
When the shape is a $2\times n$ matrix, the enumeration problems for
the supports $\setP$ and $\setP'$ are equivalent, denoted by
$\setP\equiv \setP'$, if $\setP'$ is
obtained from $\setP$ by applying the following
basic transformations one or more times:
\vspace{\smallskipamount}

(T1) exchanging left column and right column in every piece;
\vspace{\smallskipamount}

(T2) exchanging top row and bottom row in every piece;
\vspace{\smallskipamount}

(T3) replacing each label $a$ by $(5-a)$ in every piece.
\vspace{\medskipamount}

For example, the support 
$\{\piece 1234,\piece 1243\}$ is equivalent 
to $\{\piece 2143,\piece 2134\}$ by (T1), 
to $\{\piece 4321,\piece 3421\}$ by (T2) 
and
to $\{\piece 4321,\piece 4312\}$ by (T3).
The enumeration problems for the following supports 
$\setP$ are all equivalent.
$$
\begin{matrix}
&\{\piece 1234,\piece 1243\} \equiv  \{\piece 3421,\piece 4321\}\equiv
\{\piece 2134,\piece 2143\} \equiv  \{\piece 3412,\piece 4312\}  \cr
\noalign{\vspace{\smallskipamount} }
\equiv&\{\piece 4312,\piece 4321\} \equiv  \{\piece 1234,\piece 2134\}\equiv
\{\piece 3412,\piece 3421\} \equiv  \{\piece 1243,\piece 2143\}. \cr
\end{matrix}
$$
\vspace{\medskipamount}

A support $\setP$ is said to be {\it connected} if for every pair of pieces
from $\setP$, there is a puzzle containing those two pieces.
The non-connected supports
$\setP$ can be fully characterized, and have one of the following three forms:
{
\def\piecetype#1#2{
\Bigl[\begin{matrix}
	\cdot & \cdot \cr
\noalign{\vskip -3pt}
#1  & #2 \cr
\noalign{\vskip -3pt}
	\cdot & \cdot \cr
		\end{matrix}
\Bigr]
}
\begin{eqnarray*}
(a)&\left\{\piecetype{\wedge}{\wedge},\ 
\piecetype{\wedge}{\wedge},
\cdots,
\piecetype{\wedge}{\wedge};\quad
\piecetype{\vee}{\vee},\ 
\piecetype{\vee}{\vee},
\cdots ,
\piecetype{\vee}{\vee}\right\}&
\cr
(b)&\left\{\piecetype{\wedge}{\vee};\quad
\piecetype{\wedge}{\vee};\quad
\cdots\ ;
\piecetype{\wedge}{\vee}\right\}&
\cr
(c)&\left\{\piecetype{\vee}{\wedge};\quad
\piecetype{\vee}{\wedge};\quad
\cdots \ ;
\piecetype{\vee}{\wedge}\right\}&
\end{eqnarray*}
} 

For example, the support 
$\{ \piece 1234, \piece 1243;  \piece 4321\} $
is not-connected (type (a)). It contains two connected components.
For studying the enumeration problem we need only consider
the connected supports.
After reduction by (T1-T3), the number of possible connected 
supports $\setP$ is shown in the following
table
$$
\begin{matrix}
|\setP|&: &	 1 &2 &3 &4 & 5    & 6&\cdots \cr
\#\{\setP\}   &:   &  6 & 37 & 259 & 1391 & 5460 & ?  &\cdots \cr 
\end{matrix}
$$
The list of all those sets $\setP$ with the sequences can be found in Section \ref{sec.dict}.

\begin{problem}
What is the sequence of the numbers of possible connected supports $\#\{\setP\}$ ?
\end{problem}

\vspace{\medskipamount}

Fix a support $\setP$. 
Let $f_n[\smatrix{X\cr Y\cr}]$
be the number of all puzzles of shape $2\times n$ 
such that the rightmost colunm is $[\smatrix{X\cr Y\cr}]$.
Then, the number $f_n$ of all puzzles of shape $2\times n$ is then
\begin{equation}\label{eq.sum}
f_n = \sum_{X,Y} f_n[\smatrix{X\cr Y\cr}],
\end{equation}
where the sum ranges over all ordered pairs of $[2n]\times [2n]$.
\vspace{\medskipamount}

The {\it inverse reduction} $x'=\Omega^{-1}(x; X, Y)$ of $x$ by $\{X,Y\}$ is defined by
$$
x'=
\begin{cases}
x;   &\text{ if $x\leq a-1$} \cr
x+1; &\text{ if $a\leq x \leq b-2$}\cr
x+2, &\text{ if $b-1 \leq x$ }\cr
\end{cases}
$$
where $a=\min(X,Y)$ and $b=\max(X,Y)$.

\begin{proposition}
Let $1\leq X, Y \leq 2n$ and $X\not= Y$. Then 
\begin{equation}\label{eq.XY}
 f_n[\smatrix{X\cr Y\cr}] =
 \sum_{x,y} f_{n-1}[\smatrix{x\cr y\cr}]  
\end{equation}
where the sum ranges over all ordered pairs $(x,y)$ of $[2n-2]\times [2n-2]$
such that the reduction of
$[\smatrix{\Omega^{-1}(x; X, Y) & X \cr \Omega^{-1}(y; X, Y) & Y\cr }]$ is a piece in $\setP$.
\end{proposition}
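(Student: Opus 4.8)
The plan is to prove \eqref{eq.XY} by the bijection that peels off the rightmost column of a puzzle. Fix the support $\setP$, and recall that a puzzle counted by $f_n$ is nothing but a $2\times n$ matrix $M$ with entries $1,2,\dots,2n$ (each occurring once), each of whose $n-1$ blocks formed by two consecutive columns reduces under $\Omega$ to a piece of $\setP$. First I would record two elementary facts used throughout. (i) $\Omega^{-1}(\,\cdot\,;X,Y)$ is exactly the unique strictly increasing bijection of $[2n-2]$ onto $[2n]\setminus\{X,Y\}$; this is immediate from its three-case definition, whose image is $\{1,\dots,a-1\}\cup\{a+1,\dots,b-1\}\cup\{b+1,\dots,2n\}$ with $a=\min(X,Y)$ and $b=\max(X,Y)$. (ii) The reduction $\Omega$ of a piece depends only on the relative order of its four labels, so transporting those labels by any strictly increasing map leaves the reduction unchanged.

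Then I would describe the forward map. Let $M$ be a puzzle of shape $2\times n$ all of whose pieces reduce into $\setP$, with rightmost column $\smatrix{X\cr Y}$; delete that column. The remaining $2\times(n-1)$ array $\widehat M$ has entry set exactly $[2n]\setminus\{X,Y\}$, so applying $(\Omega^{-1}(\,\cdot\,;X,Y))^{-1}$ to every entry yields a $2\times(n-1)$ matrix $M'$ with entry set $[2n-2]$. This relabelling is order preserving, so by (ii) every two-column block of $M'$ has the same reduction as the corresponding block of $\widehat M$, i.e. as the corresponding one among the first $n-2$ pieces of $M$; hence $M'$ is again a puzzle and all its pieces reduce into $\setP$. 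Writing $\smatrix{x\cr y}$ for its rightmost column, the $(n-1)$st column of $M$ equals $\Omega^{-1}(\,\cdot\,;X,Y)$ applied to $\smatrix{x\cr y}$, so the last piece of $M$ is $\bigl[\smatrix{\Omega^{-1}(x;X,Y) & X\cr \Omega^{-1}(y;X,Y) & Y\cr}\bigr]$, whose reduction lies in $\setP$ — which is precisely the condition defining the admissible pairs $(x,y)$. Thus $M\mapsto M'$ carries the set counted by $f_n[\smatrix{X\cr Y}]$ into the disjoint union, over admissible $(x,y)$, of the sets of puzzles of shape $2\times(n-1)$ with rightmost column $\smatrix{x\cr y}$.

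Next I would check that the obvious reverse procedure is an inverse. Given an admissible pair $(x,y)$ and a puzzle $M'$ of shape $2\times(n-1)$, all of whose pieces reduce into $\setP$, with rightmost column $\smatrix{x\cr y}$, apply $\Omega^{-1}(\,\cdot\,;X,Y)$ to every entry and adjoin the column $\smatrix{X\cr Y}$ on the right. By (ii) the first $n-2$ pieces keep their reductions and so remain in $\setP$; the new last piece reduces into $\setP$ by admissibility; and the entry set becomes $\{1,\dots,2n\}$ since the transported entries fill $[2n]\setminus\{X,Y\}$ and the adjoined ones are $X,Y$. Hence one recovers a puzzle of shape $2\times n$, all of whose pieces reduce into $\setP$, with rightmost column $\smatrix{X\cr Y}$, and the two maps are mutually inverse by construction. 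Since the number of puzzles of shape $2\times(n-1)$ with prescribed rightmost column $\smatrix{x\cr y}$ is by definition $f_{n-1}[\smatrix{x\cr y}]$, and distinct admissible pairs index disjoint classes, summing over admissible $(x,y)$ gives \eqref{eq.XY}.

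The main thing to be careful about is the middle step: verifying that $\Omega$ commutes with the order isomorphism, so that corresponding pieces of $M$, $\widehat M$ and $M'$ all reduce to the same element, and tracking the entry sets so that the reconstructed matrix is a genuine standard puzzle. One should also fix the boundary convention that a single column is a piece-free puzzle, which is what makes the first instance $n=2$ of the recursion valid; the remaining verifications are routine.
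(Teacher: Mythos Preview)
Your proof is correct. The paper itself does not give a proof of this proposition at all: it merely states the recursion and then remarks that formulas~(\ref{eq.sum}) and~(\ref{eq.XY}) are used to compute the first values of $(f_n)$. Your bijective argument---deleting the rightmost column, relabelling via the order-preserving bijection $[2n]\setminus\{X,Y\}\to[2n-2]$, and checking that the admissibility condition on $(x,y)$ encodes exactly the requirement that the deleted piece reduce into $\setP$---is precisely the natural proof the paper leaves implicit, and every step is handled carefully (in particular your points (i) and (ii) and the tracking of entry sets are what make the argument airtight).
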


Formulas (\ref{eq.sum}) and (\ref{eq.XY}) are 
used for computing the first values of the number of standard puzzles $(f_n)$.

\section{Selected examples}\label{sec.example} 
In Section \ref{sec.dict} we display the full list of all standard puzzle sequences
including the outputs from OEIS if any. Each item in the dictionary may contain
the following fields:
\vspace{\smallskipamount}

- Codes and pieces: The support $\setP$ of the standard puzzles
\vspace{\smallskipamount}

- See also: List of other supports $\setP'$ such  $|\setP^n|= |\setP'^n|$
\vspace{\smallskipamount}

- Seq: The sequence $(|\setP^n|)$ for $n=2,3,4,\ldots$
\vspace{\smallskipamount}

- Var: This is a variant of the sequence of $(|\setP^n|)$.
Sometimes the sequence is not in OEIS but
a slight modification of that sequence is in OEIS.
\vspace{\smallskipamount}

- OEIS: The OEIS output contains three parts:
the OEIS code, the number of results found (written in square brackets [\ ]), 
and the description of the first result found in OEIS.

\vspace{\medskipamount}

In the present paper we will not reproduce all the formulas
observed in the same way as formulas (\ref{eq.BC}-\ref{eq.EJRV})
	further derived.
In fact, we have 114 sequences identified in OEIS
up to order $|\setP|=4$. It corresponds to 309 supports $\setP$,
that is, we have 309 formulas to prove!
\vspace{\medskipamount}

For example, we find the following item in the dictionary.
\goodbreak
\vspace{\medskipamount}

\hrule
\nobreak
\vspace{\smallskipamount}
\nobreak
{
\parindent=0pt
\goodbreak{\bf  BC }\quad $\{\piece 1243, \piece 1324\}$
\quad See also 
{\bf BD}
\vspace{\smallskipamount}

{Seq= 2, 5, 14, 42, 132, 429, 1430, 4862, 16796, 58786, 208012 }

OEIS:  \seqnum{A000108} [5] Catalan numbers: C(n) = binomial(2n,n)/(n+1) = (2n)!/(n!(n+1)!). Also called Segner numbers.
\vspace{\smallskipamount}

}
\hrule
\vspace{\bigskipamount}
From the above item $BC$ in the dictionary we extract the following results.
\begin{theorem}
We have
\begin{eqnarray}
|BC^n|&=&|\bigl\{ \piece 1243, \piece 1324 \bigr\}^{n}| = C_n; \label{eq.BC}\\
|BD^n|&=&|\bigl\{ \piece 1243, \piece 1342 \bigr\}^{n}| = C_n,\label{eq.BD} 
\end{eqnarray}
where $C_n=\frac{1}{n+1} \binom{2n}{n}$ is the Catalan numbers 
\cite{Wiki_Catalan}.
\end{theorem}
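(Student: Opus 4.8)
The plan is to show both identities by setting up a bijection between the standard puzzles in $BC^n$ (resp. $BD^n$) and a Catalan-counted family, or alternatively by showing that the transfer recurrence of Proposition~\ref{...} for these supports reproduces the Catalan recurrence. I would favor the bijective route because the pieces $B=\piece 1243$ and $C=\piece 1324$ are precisely the two pieces whose top row is increasing from left to right while the bottom-left label is the global minimum among the four; examining the explicit lists $BC^2, BC^3, BC^4$ in the excerpt, one sees that in every puzzle of $BC^n$ the bottom row is exactly $1,2,\ldots,n$ in that order, and the top row is a permutation of $\{n+1,\ldots,2n\}$ subject to a "staircase" constraint coming from the local legality of each of the $n$ pieces. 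So the first step is to prove this structural normal form: in any $BC^n$-puzzle the bottom row is forced to be $1\,2\,\cdots\,n$.

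Granting the normal form, the second step is to translate the piece-legality condition into a condition on the top row $t_1 t_2 \cdots t_n$ (a permutation of $\{n+1,\dots,2n\}$, equivalently of $\{1,\dots,n\}$ after subtracting $n$). The reduction of the $i$th column pair $\piece{t_i}{i}{t_{i+1}}{i+1}$ must be $B$ or $C$; since $i<i+1$ and both $t_i,t_{i+1}$ exceed $i,i+1$, the reduction is determined by whether $t_i<t_{i+1}$ ($\to B$, pattern $1243$) or $t_i>t_{i+1}$ ($\to C$, pattern $1324$) — but one must check the finer constraint that in piece $C$ the value in the lower-left is $1$, i.e. the relative order forces $\min$ at bottom-left, which is automatic here, and that no third label collides. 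Working this out should reveal that the admissible top rows are exactly those permutations avoiding a single length-$3$ pattern (I expect $231$ or $312$), hence counted by $C_n$; the cleanest finish is then a direct bijection with Dyck paths or with the stack-sortable permutations, reading $t_1,\dots,t_n$ left to right and pushing/popping according to ascents and descents.

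For \eqref{eq.BD}, the support is $\{B,D\}=\{\piece 1243,\piece 1342\}$; both pieces again have bottom-left label $1$ and top row increasing, and $D=\piece 1342$ differs from $C$ only in the internal order of the two larger values. I expect the same normal form (bottom row $1\,2\,\cdots n$) and that the top-row condition again cuts out a pattern-avoiding class of the same cardinality $C_n$ — this is consistent with the ``See also {\bf BD}'' annotation in the dictionary. So the third step is to run the identical argument with $D$ in place of $C$ and identify the (possibly different) forbidden pattern, then invoke Wilf-equivalence of length-$3$ patterns, or exhibit an explicit bijection $BC^n\to BD^n$ fixing the bottom row and permuting the top row.

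The main obstacle is the first step: rigorously proving that the bottom row must be $1\,2\,\cdots,n$, rather than merely observing it in small cases. The argument should go by induction on $n$ using Proposition~\ref{...} — the label $1$ and the label $2$ cannot appear in the top row because every piece in $\setP$ carries its two smallest labels on the bottom, so at each column one of the bottom entries is forced small, and an exchange argument (or direct analysis of where $1,2,\dots$ can sit given the inverse-reduction insertion rule) pins the bottom row completely. Once that rigidity is established the rest is the routine pattern-avoidance bookkeeping sketched above.
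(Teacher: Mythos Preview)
Your central structural claim is false, and it is falsified by the very lists you say you examined. In $BC^3$ the puzzle $\bigl[\smatrix{563\cr124\cr}\bigr]$ has bottom row $1,2,4$, and in $BC^4$ the puzzle $\bigl[\smatrix{3785\cr1246\cr}\bigr]$ has bottom row $1,2,4,6$. The same happens for $BD$: already for $n=2$ the piece $D$ itself is $\bigl[\smatrix{24\cr13\cr}\bigr]$ with bottom row $1,3$. What is true is only that the bottom row is \emph{increasing} (since both $B$ and $C$, and both $B$ and $D$, place the minimum at bottom-left); the top row is then \emph{not} a permutation of $\{n+1,\dots,2n\}$, and your reduction to a pattern-avoidance problem on such a permutation never gets started.

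The paper's argument is different. For $BD$, both pieces have both rows increasing and each column increasing upward, so $BD^n$ is literally the set of standard Young tableaux of shape $2\times n$, counted by $C_n$ via the hook length formula. For $BC$, the key structural fact is not about the bottom row but about where $C$ can sit: since $C=\bigl[\smatrix{4\,2\cr1\,3\cr}\bigr]$ has top-right smaller than bottom-right, while every piece in $\{B,C\}$ needs its bottom-left entry to be the local minimum, no piece can be placed to the right of a $C$. Hence a $BC^n$ puzzle is either all $B$'s, or all $B$'s followed by a single $C$ at the far right. The paper then strips the vertices whose labels are forced in each of these two diagrams and in the $BD$ diagram, and matches the two $BC$ cases bijectively with the two halves of $BD^n$ obtained by comparing the two rightmost free labels.
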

\begin{proof}
Equation (\ref{eq.BD}) is well-known since the puzzles with support 
$BD=\bigl\{ \piece 1243, \piece 1342 \bigr\}$ are just the 
standard Young tableaux, which can be counted by the famous hook 
length formula \cite{Hook}. In fact, each puzzle with support $BD$
is just a labelling of the vertices of the following diagram 
such that the labels are increasing in the sense of the arrows \cite[p. 227]{Stanley}.
%
\long\def\pythonbegin#1\pythonend{}
\pythonbegin
beginfig(10, "1.8mm");  
setLegoUnit([6,6])
#u_showgrid([0,0], [8,8])  # show grid
r=0.12  # dot radius
def myarc(pa, pb):
	arc(pa, pb, marge=2*r, h=0, f=0.6, flen=0.01)

[circle([x,0], r) for x in range(5)] 
[circle([x,1], r) for x in range(5)] 
[myarc([x,1], [x+1,1]) for x in range(4)] 
[myarc([x,0], [x+1,0]) for x in range(4)] 
[myarc([x,0], [x,1]) for x in range(5)] 

endfig();
\pythonend
\begin{figure}[ht]
\begin{center}
\includegraphics{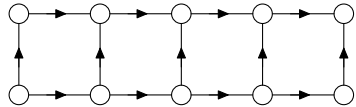}
\end{center}
\vspace{-12pt}
\caption{Puzzles having the support $BD$}
\label{fig.BD}
\end{figure}

However, identity (\ref{eq.BC}) cannot be directly found in the 
references. A further bijection is to be constructed to 
map a puzzle 
from $BC^n$ onto a puzzle from $BD^n$. 
Each puzzle with support $BC$ contains no piece $C$, or only one piece $C$
at the rightmost position.
Hence, the puzzles with support $BC$ can be characterized by the following 
two diagrams:
%
\long\def\pythonbegin#1\pythonend{}
\pythonbegin
beginfig(11, "1.8mm");  
setLegoUnit([6,6])
#u_showgrid([0,0], [8,8])  # show grid
r=0.12  # dot radius
def myarc(pa, pb):
	arc(pa, pb, marge=2*r, h=0, f=0.6, flen=0.01) 
def myarc2(pa, pb):
	arc(pa, pb, marge=2*r, h=0, f=0.4, flen=0.01)

[circle([x,0], r) for x in range(5)] 
[circle([x,1], r) for x in range(5)] 
[myarc([x,1], [x+1,1]) for x in range(4)] 
[myarc([x,0], [x+1,0]) for x in range(4)] 
[myarc([x+1,0], [x,1]) for x in range(4)] 

setOffset([40,0])

[circle([x,0], r) for x in range(5)] 
[circle([x,1], r) for x in range(5)] 
[myarc([x,1], [x+1,1]) for x in range(3)] 
[myarc([x,0], [x+1,0]) for x in range(3)] 
[myarc([x+1,0], [x,1]) for x in range(3)] 
myarc2([3,0],[4,1])
myarc2([4,0],[3,1])
myarc([4,1],[4,0])

endfig();
\pythonend
\begin{figure}[H]
\begin{center}
\includegraphics{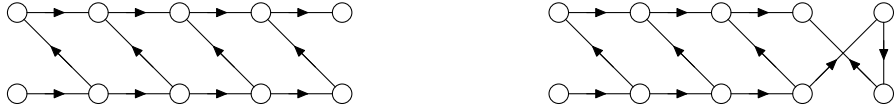}
\end{center}
\vspace{-12pt}
\caption{Puzzles having the support $BC$}
\label{fig.BC}
\end{figure}

In Fig. \ref{fig.BD} and \ref{fig.BC} the labels of certain vertices 
can only have fixed values, in particular maximal or minimal values. 
Removing those vertices yields the simplified diagrams Figures~\ref{fig.BD.reduction} and~\ref{fig.BC.reduction}.
The standard labellings of the diagram in Fig.~\ref{fig.BD.reduction} 
such that $a<b$ (resp. $a>b$) are in bijection with the standard labellings 
of the left diagram (resp. right diagram)
in Fig.~\ref{fig.BC.reduction}.
This gives a bijection between $BC^n$ and $BD^n$.
%
%
\long\def\pythonbegin#1\pythonend{}
\pythonbegin
beginfig(12, "1.8mm");  
setLegoUnit([6,6])
#u_showgrid([0,0], [8,8])  # show grid
r=0.12  # dot radius
def myarc(pa, pb):
	arc(pa, pb, marge=2*r, h=0, f=0.6, flen=0.01)

[circle([x+1,0], r) for x in range(4)] 
[circle([x,1], r) for x in range(4)] 
[myarc([x,1], [x+1,1]) for x in range(3)] 
[myarc([x+1,0], [x+2,0]) for x in range(3)] 
[myarc([x+1,0], [x+1,1]) for x in range(3)] 
label([3,1], "{$\scriptstyle a$}")
label([4,0], "{$\scriptstyle b$}")

endfig();
\pythonend
\begin{figure}[ht]
\begin{center}
\includegraphics{\jobname12.eps}
\end{center}
\vspace{-12pt}
\caption{Puzzles having the support $BD$ (after simplification)}
\label{fig.BD.reduction}
\end{figure}
%
%
\long\def\pythonbegin#1\pythonend{}
\pythonbegin
beginfig(13, "1.8mm");  
setLegoUnit([6,6])
#u_showgrid([0,0], [8,8])  # show grid
r=0.12  # dot radius
def myarc(pa, pb):
	arc(pa, pb, marge=2*r, h=0, f=0.6, flen=0.01) 
def myarc2(pa, pb):
	arc(pa, pb, marge=2*r, h=0, f=0.4, flen=0.01) 

[circle([x+2,0], r) for x in range(3)] 
[circle([x,1], r) for x in range(3)] 
[myarc([x,1], [x+1,1]) for x in range(2)] 
[myarc([x+2,0], [x+3,0]) for x in range(2)] 
[myarc([x+2,0], [x+1,1]) for x in range(2)] 

setOffset([40,0])

[circle([x+2,0], r) for x in range(3)] 
[circle([x,1], r) for x in range(3)] 
[myarc([x,1], [x+1,1]) for x in range(2)] 
[myarc([x+2,0], [x+3,0]) for x in range(1)] 
[myarc([x+2,0], [x+1,1]) for x in range(2)] 
circle([4,1],r)
myarc([3,0],[4,1])
myarc([4,1],[4,0])

endfig();
\pythonend
\begin{figure}[ht]
\begin{center}
\includegraphics{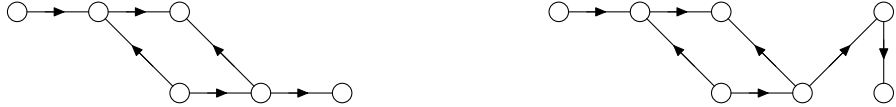}
\end{center}
\vspace{-12pt}
\caption{Puzzles having the support $BC$ (after simplification)}
\label{fig.BC.reduction}
\end{figure}
\end{proof}

Recall that the classical tangent numbers $T_{2n+1}$,
secant numbers $E_{2n}$ and the unsigned Genocchi numbers $G_{2n}$ are defined by the following generating functions \cite{Wiki_Tangent}.
\begin{eqnarray*}
 \tan u&=&\sum_{n\ge 0}\frac{u^{2n+1}}{ (2n+1)!}T_{2n+1}  \\
 \sec u&=&\sum_{n\ge 0} \frac{u^{2n}}{ (2n)!}E_{2n} \\
\frac{2u }{ {e^u+1}}&=&u+ \sum_{n \geq 1}(-1)^n  \frac{u^{2n}  }{ (2n)!}G_{2n}
\end{eqnarray*}
It is well-known that $T_{2n+1}$ and $E_{2n}$ count the numbers of
alternating permutations of length $2n+1$ and $2n$ respectively \cite{Wiki_Tangent}.
The unsigned Genocchi numbers $G_{2n}=n T_{2n-1}/2^{2n-2}$ count
the numbers of surjective staircases \cite{Dumont}. 
The support $\setP=BCEG$ involves those numbers.  
\newpage 
\vspace{\medskipamount}

\hrule
\vspace{\smallskipamount}

{\parindent=0pt
\goodbreak{\bf  BCEG } \ $\{\piece 1243, \piece 1324, \piece 1423, \piece 2134\}$
\vspace{\smallskipamount}
See also 
{\bf BEGJ}, {\bf BEGY}, {\bf BEJV}, {\bf BEVY}, {\bf BJTV}, {\bf EJRV}
\vspace{\smallskipamount}

{Seq= 4, 24, 272, 4960, 132672, 4893056, 237969664, 14756156928 }

{Var= 1, 3, 17, 155, 2073, 38227, 929569, 28820619, 1109652905 }

OEIS:  \seqnum{A110501} [2] Unsigned Genocchi numbers (of first kind) of even index.
\vspace{\smallskipamount}

}
\hrule

\vspace{\medskipamount}

From the above item $BCEG$ in the dictionary we read the following seven identities as shown in Theorem \ref{th.BJTV} and Conjecture \ref{conj.BCEG}.
\begin{theorem}\label{th.BJTV}
We have
\begin{eqnarray}
|BEGJ^n|  &=& |\{\piece 1243, \piece 1423, \piece 2134, \piece 2314\}^n|
  = nT_{2n-1}/2^{n-2}\,; \label{eq.BEGJ}\\
|BEGY^n|  &=& |\{\piece 1243, \piece 1423, \piece 2134, \piece 4312\}^n|
  = nT_{2n-1}/2^{n-2}\,; \label{eq.BEGY}\\
|BEJV^n|  &=& |\{\piece 1243, \piece 1423, \piece 2314, \piece 4132\}^n|
  = nT_{2n-1}/2^{n-2}\,; \label{eq.BEJV}\\
|BEVY^n|  &=& |\{\piece 1243, \piece 1423, \piece 4132, \piece 4312\}^n|
  = nT_{2n-1}/2^{n-2}\,; \label{eq.BEVY}\\
|BJTV^n|  &=& |\{\piece 1243, \piece 2314, \piece 3421, \piece 4132\}^n|
  = nT_{2n-1}/2^{n-2}\,; \label{eq.BJTV}\\
|EJRV^n|  &=& |\{\piece 1423, \piece 2314, \piece 3241, \piece 4132\}^n|
  = nT_{2n-1}/2^{n-2}\,. \label{eq.EJRV}
\end{eqnarray}
\end{theorem}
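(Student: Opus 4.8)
The plan is to prove one of the six identities in full --- I would take \eqref{eq.BEGJ} --- by the diagram/poset-labelling method already used above for $BC$ and $BD$, and only afterwards deal with the other five. It should be noted first that the six supports $BEGJ,BEGY,BEJV,BEVY,BJTV,EJRV$ are pairwise inequivalent under the transformations (T1)--(T3), so the equalities asserted in the theorem are not consequences of the symmetries alone: one must either produce genuine piece-by-piece bijections between the six puzzle families, or repeat the counting argument for each of them.

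For a fixed support, say $BEGJ=\{\piece 1243,\piece 1423,\piece 2134,\piece 2314\}$, the admissible puzzles are easy to describe. Each of the four pieces has its left column increasing from bottom to top, so in any $2\times n$ puzzle with this support every column increases from bottom to top, except possibly the rightmost one; comparing the right-column orientations of the four pieces then forces the pieces in positions $1,\dots,n-2$ to lie in $\{\piece 1243,\piece 2134\}$ while the last piece is arbitrary, so there are exactly $2^{\,n}$ admissible piece-type words $w=(t_1,\dots,t_{n-1})$. For each $t_i$ the reduced piece imposes a total order on the four corner values $u_i,v_i,u_{i+1},v_{i+1}$ (the top and bottom entries of columns $i$ and $i+1$); superposing these chains along $w$ gives a ladder-shaped poset $P(w)$ on the $2n$ corners, and a puzzle with support $BEGJ$ and word $w$ is exactly a linear extension of $P(w)$. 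As in the $BC$/$BD$ picture one can first simplify $P(w)$ by discarding the corners whose labels are forced.

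The core of the proof is then the identity
\begin{equation*}
\sum_{w}\, e\bigl(P(w)\bigr)\;=\;\frac{n\,T_{2n-1}}{2^{\,n-2}}\;=\;2^{\,n}\,G_{2n},
\end{equation*}
where $e(\cdot)$ counts linear extensions and $w$ runs over the $2^{\,n}$ admissible words. I would prove it bijectively: encode each pair consisting of an admissible word $w$ and a linear extension of $P(w)$ as one of Dumont's surjective staircases --- which, as recalled in the paper, are counted by $G_{2n}$ --- together with $n$ independent binary decorations, the $2^{\,n}$ being precisely the number of admissible words; with the right normalisation of the ladder posets, forgetting the $n$ bits should leave exactly a surjective staircase. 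A more mechanical alternative uses formula~\eqref{eq.XY}: group the quantities $f_n[\smatrix{X\cr Y\cr}]$ according to a finite interface type that records how the two entries of the current rightmost column sit relative to each other and to the values already placed, read off the induced linear recursion on these finitely many aggregated numbers, and check that it reproduces the classical convolution recurrence for the tangent numbers, thereby identifying the count with $n\,T_{2n-1}/2^{\,n-2}$.

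Finally, for $BEGY$, $BEJV$, $BEVY$, $BJTV$ and $EJRV$ I would first hunt for a local rewriting --- replacing each piece of the given support by a piece of $BEGJ$ while keeping the labels --- that turns a puzzle into a $BEGJ$-puzzle, which would make \eqref{eq.BEGY}--\eqref{eq.EJRV} immediate from \eqref{eq.BEGJ}; where no such rewriting exists, the analysis of the second paragraph (admissible piece-type words, ladder posets, forced corners) must be redone for that support, after which the counting argument of the third paragraph applies verbatim. I expect the real obstacle to be exactly that counting identity: recognising the decorated ladder-poset labellings as surjective staircases --- or, on the recursion side, reducing the interface transfer matrix to the tangent-number recurrence --- is where all the difficulty lies, and because the six supports are genuinely distinct this recognition cannot be inherited from a single model but has to be carried out, in essentially the same way, for each of them.
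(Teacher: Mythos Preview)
Your proposal has a genuine gap at exactly the point you yourself flag as the obstacle. The ``core identity'' $\sum_w e(P(w)) = 2^n G_{2n}$ is not proved: you only say you would encode the data as a Dumont surjective staircase with $n$ binary decorations, ``with the right normalisation''. That is a wish, not a construction. The paper explicitly remarks that no easy direct proof of the $BJTV$ identity is known, and your programme for $BEGJ$ is of the same order of difficulty; nothing in the sketch indicates how to actually carry it out. The alternative you mention, pushing the recursion \eqref{eq.XY} through a finite interface transfer matrix until it reproduces the tangent recurrence, is equally unexecuted.

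The paper proceeds quite differently and avoids this difficulty entirely. It takes \eqref{eq.BJTV} as known from \cite{GZ,FHa} and then proves that all six counts are equal to one another, rather than evaluating any of them from scratch. The mechanism is a family of \emph{flip maps} $\phi_S$, which swap the top and bottom entries in the columns indexed by $S$. The key observation is that $\phi_{\{1,2\}}$ stabilises both $\{B,G,T,Y\}$ and $\{E,J,R,V\}$, and that $\phi_{\{2\}}$ interchanges these two sets piece for piece. From this one shows that every support in the Cartesian family $\Gamma=\{B,E\}\times\{G,J\}\times\{T,R\}\times\{Y,V\}$ has the same enumeration as $BGTY$, via an explicit left-to-right scanning bijection that flips a suffix of columns each time an unwanted piece is encountered. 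Now $BJTV$ and $EJRV$ lie in $\Gamma$ directly, and the remaining four supports $BEGJ,BEGY,BEJV,BEVY$, while not themselves in $\Gamma$, are equivalent under (T1)--(T3) to members of $\Gamma$ (namely $BGRV$, $EGTY$, $BJRV$, $EJRY$). So all six counts coincide with $|BJTV^n|$, and the numerical value comes from the literature.

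In short, you try to evaluate one support directly and then transport the answer; the paper transports first, using the flip maps as the unifying bijection, and imports the single hard evaluation from elsewhere. The idea you are missing is precisely this column-flip involution and the closure of $\{B,G,T,Y\}$ and $\{E,J,R,V\}$ under it.
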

\begin{proof}
The enumeration of $|BJTV^n|$ is derived in \cite{GZ, FHa} and
identity (\ref{eq.BJTV}) is proven to be true. 
Notice that no easy direct proof of (\ref{eq.BJTV}) is known.
We prove that all the six left-hand sides of equations (\ref{eq.BEGJ}-\ref{eq.EJRV}) are equal to
\begin{equation}\label{eq.BGTY}
|BGTY^n|  = |\{\piece 1243, \piece 2134, \piece 3421, \piece 4312\}^n|\,.
\end{equation}

Let $S$ be a subset of $\{1,2,\ldots, n\}$. The {\it flip map} 
$\phi_S: \alpha \mapsto \beta$ is a transformation which maps
a puzzle 
$\alpha=\bigl[\smatrix{x_1 x_2 \cdots x_n\cr y_1 y_2 \cdots y_n}\bigr]$ 
onto
$\beta=\bigl[\smatrix{a_1 a_2 \cdots a_n\cr b_1 b_2 \cdots b_n}\bigr]$ 
such that
$a_i=x_i, b_i=y_i$ for $i\not\in S$ and
$a_i=y_i, b_i=x_i$ for $i\in S$.
Notice that the flip map is an involution.
The following diagram shows the actions of the 
flip maps onto some pieces:
$$
\begin{array}{ccc}
(B,G,T,Y) &\hfl{\phi_{\{1,2\}}}{}{14mm}& (T,Y,B,G) \\
\noalign{\vspace{6pt}}
\vfl{}{\phi_{\{2\}}}{5mm}  & & \\
\noalign{\vspace{6pt}}
(E,J,R,V) &\hfl{\phi_{\{1,2\}}}{}{14mm}& (R,V,E,J) 
\end{array}
$$
The {\it important facts} are that the flip map
$\phi_{\{1,2\}}$ does not change the two supports
$\{B,G,T,Y\}$ and $\{E,J,R,V\}$; moreover, those two supports
do not contain any common piece. We can prove that
all the $2^4=16$ supports $\setP$ derived from the Cartesian product
$\Gamma:=\{B,E\}\times \{G,J\} \times \{T,R\} \times \{Y,V\}$
yield the same enumeration sequences $(|\setP^n|)$. For example, we will
explain why
\begin{equation}\label{eq.tan.proof}
|BGTY^n| = |BJRY^n|\,.
\end{equation}
Notice that 
\begin{equation}
|BGTY^n| = |BG^n| + |TY^n| = 2 |BG^n|\,.
\end{equation}
Let $\alpha$ be a puzzle in $BGTY^n$. We construct 
a puzzle $\beta$ in $BJRY^n$ in a unique manner by applying 
an adequate sequence of flip maps.

Start at $\gamma:=\alpha$; from left to right look for the first piece 
in $\gamma$
that is not in $\{B,J,R,Y\}$, that is, the first piece equal to $G$ or $T$.
Let $(i,i+1)$ be the position of that piece. 
Apply the flip map 
$\phi_{\{i+1,i+2,\ldots, n-1\}}$ to $\gamma$, to obtain a new puzzle. 
By convention, let
$\gamma:=\phi_{\{i+1,i+2,\ldots, n-1\}}(\gamma)$. 
Hence, the piece at position $(i,i+1)$ becomes $J$ or $R$,
and all pieces on the right of that position are still in $\{B,G,T,Y\}$,
by the important facts mentioned above. Repeat this processus
until no more $G$ or $T$ are in $\gamma$ to get the puzzle $\beta:=\gamma$
in $BJRY^n$. 

For example, take 
$\alpha= \bigl[\smatrix{11 & 13 & 10 & 6 & 12 & 8 & 14\cr
7 & 9 & 2 & 1 & 4 & 3 & 5\cr}\bigr] \in BGTY^n$. 
The calculation in Table \ref{table.bij} shows
that
$\beta= \bigl[\smatrix{
11 & 13& 2& 1 &12 & 3 & 14\cr
7 &  9 & 10 & 6 & 4 & 8 & 5\cr
}\bigr] \in BJRY^n$. Note that the number of a piece, for example $B$,
common to two different supports,
is not preserved.
\begin{table}
{
\def\cc#1{{\rlap{\, \tiny #1}}}
$$
\begin{array}{ccccc}
&\text{puzzle} & \text{first $G/T$} & \text{position} & \text{flip} \cr
\noalign{\vspace{6pt}}
\rightarrow & \begin{bmatrix}
11 & 13& 10& 6 &12 & 8 & 14\cr
\cc B &\cc G &\cc G &\cc B & \cc G &\cc B &\cr
7 &  9 & 2 & 1 & 4 & 3 & 5\cr
\end{bmatrix} 
&  G& (2,3) & S=\{3,4,5,6,7\} \cr
\noalign{\vspace{6pt}}
\rightarrow & \begin{bmatrix}
11 & 13& 2& 1 &4 & 3 & 5\cr
\cc B &\cc J &\cc Y &\cc T & \cc Y &\cc T &\cr
7 &  9 & 10 & 6 & 12 & 8 & 14\cr
\end{bmatrix} 
&  T& (4,5) & S=\{5,6,7\} \cr
\noalign{\vspace{6pt}}
\rightarrow & \begin{bmatrix}
11 & 13& 2& 1 &12 & 8 & 14\cr
\cc B &\cc J &\cc Y &\cc R & \cc G &\cc B &\cr
7 &  9 & 10 & 6 & 4 & 3 & 5\cr
\end{bmatrix} 
&  G& (5,6) & S=\{6,7\} \cr
\noalign{\vspace{6pt}}
\rightarrow & \begin{bmatrix}
11 & 13& 2& 1 &12 & 3 & 5\cr
\cc B &\cc J &\cc Y &\cc R & \cc J &\cc T &\cr
7 &  9 & 10 & 6 & 4 & 8 & 14\cr
\end{bmatrix} 
&  T& (6,7) & S=\{7\} \cr
\noalign{\vspace{6pt}}
\rightarrow & \begin{bmatrix}
11 & 13& 2& 1 &12 & 3 & 14\cr
\cc B &\cc J &\cc Y &\cc R & \cc J &\cc R &\cr
7 &  9 & 10 & 6 & 4 & 8 & 5\cr
\end{bmatrix} 
&  &  &  \cr
\end{array}
$$
}
\label{table.bij}
\caption{Bijection from $BGTY^n$ onto $BJRY^n$}
\end{table}

This processus is reversible thanks to the important facts mentioned above.
Hence, identity (\ref{eq.tan.proof}) is proved.
Now, since $BJTV, EJRV \in \Gamma$, identities (\ref{eq.BJTV}) and
(\ref{eq.EJRV}) are proved. In the same manner, 
$BGRV, EGTY, BJRV, BJRY \in \Gamma$  
and $BGRV\equiv BEGJ,\, EGTY\equiv BEGY,\, BJRV\equiv BEJV,\, EJRY\equiv BEVY$.
This achiveves the proof of (\ref{eq.BEGJ}-\ref{eq.BEVY}).
\end{proof}

The next identity cannot be proved in the same manner, since the flip maps
are not enough to produce the bijection. Further operations are to be
constructed.
\begin{conjecture}\label{conj.BCEG}
We have
\begin{eqnarray}
|BCEG^n|  &=& |\{\piece 1243, \piece 1324, \piece 1423, \piece 2134\}^n| 
  = nT_{2n-1}/2^{n-2}\,. \label{eq.BCEG}
\end{eqnarray}
\end{conjecture}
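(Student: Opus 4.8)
The plan is to strip off the single ``exceptional'' piece that a puzzle with support $\{B,C,E,G\}$ can carry, reduce \eqref{eq.BCEG} to an identity involving only the support $BG=\{\piece 1243,\piece 2134\}$ — whose count is already pinned down in the proof of Theorem~\ref{th.BJTV} — and then close the gap with a symmetry of $BG$-puzzles. The first observation is structural: each of $B,C,E,G$ has its left column of ``$\wedge$''-type (top label larger than bottom label), so in a $2\times n$ puzzle the columns $c_1,\dots,c_{n-1}$, being left columns of pieces, are all of $\wedge$-type; hence every piece $p_i=(c_i,c_{i+1})$ with $i\le n-2$ has a $\wedge$-type right column, and among $B,C,E,G$ only $B$ and $G$ do. Thus only the last piece of a $\{B,C,E,G\}$-puzzle can be $C$ or $E$, giving
$$BCEG^n=BG^n\ \sqcup\ A_C\ \sqcup\ A_E,$$
where $A_C$ (resp.\ $A_E$) is the set of those puzzles whose last piece is $C$ (resp.\ $E$); deleting the last piece fibres $A_C$ and $A_E$ over $BG^{n-1}$. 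The same remark applied to $BEGJ=\{\piece 1243,\piece 1423,\piece 2134,\piece 2314\}$ (the piece $J$ is again $\wedge$-left, $\vee$-right) gives $BEGJ^n=BG^n\sqcup A_E\sqcup A_J$. By Theorem~\ref{th.BJTV}, $|BEGJ^n|=nT_{2n-1}/2^{n-2}$, and the proof there also shows $|BG^n|=\tfrac12\,nT_{2n-1}/2^{n-2}$ (since $2|BG^n|=|BGTY^n|$); hence $|A_E|+|A_J|=|BG^n|$, and \eqref{eq.BCEG}, which reads $|BCEG^n|=2|BG^n|$, becomes \emph{equivalent to} $|A_C|=|A_J|$.

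Next I would make the two fibre sizes explicit. Reconstructing a puzzle of $A_C$ from its $BG^{n-1}$-part $\pi$ is the same as inserting the two labels of the new last column into the label set of $\pi$. Let the last column of $\pi$ have bottom entry $\alpha(\pi)$ and top entry $\delta(\pi)$, and set $g(\pi)=\delta(\pi)-\alpha(\pi)-1$. A direct inspection of $C=\piece 1324$ shows that the admissible insertions are exactly those placing both new labels strictly between $\alpha(\pi)$ and $\delta(\pi)$, whereas for $J=\piece 2314$ one new label must lie strictly below $\alpha(\pi)$ and the other strictly between $\alpha(\pi)$ and $\delta(\pi)$. Consequently
$$|A_C|=\sum_{\pi\in BG^{n-1}}\binom{g(\pi)+2}{2},\qquad |A_J|=\sum_{\pi\in BG^{n-1}}\alpha(\pi)\,\bigl(g(\pi)+1\bigr).$$

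Then I would use a symmetry of $BG$-puzzles. The composition of (T2) and (T3) fixes the support $BG$ — it interchanges $B$ and $G$ — hence it is an involution of $BG^{n-1}$; on a puzzle it reverses the two rows and replaces every label $a$ by $2(n-1)+1-a$, so it carries the last column to the column with bottom entry $2(n-1)+1-\delta(\pi)$ and top entry $2(n-1)+1-\alpha(\pi)$. In particular $g(\pi)+1=\delta(\pi)-\alpha(\pi)$ is invariant. Averaging $|A_C|-|A_J|=\tfrac12\sum_\pi\bigl(g(\pi)+1\bigr)\bigl(g(\pi)+2-2\alpha(\pi)\bigr)$ over this involution collapses it to
$$|A_C|-|A_J|=\sum_{\pi\in BG^{n-1}}\bigl(g(\pi)+1\bigr)\bigl(g(\pi)+1-(n-1)\bigr),$$
so the whole problem reduces to the ``second moment $=(n-1)\times$ first moment'' identity
$$\sum_{\pi\in BG^{n-1}}\bigl(g(\pi)+1\bigr)^2=(n-1)\sum_{\pi\in BG^{n-1}}\bigl(g(\pi)+1\bigr)$$
for the gap $\delta(\pi)-\alpha(\pi)$ of the last column, taken over all $BG^{n-1}$-puzzles.

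The main obstacle is this last moment identity: it demands genuine control of the joint distribution of the two entries of the last column of a $BG$-puzzle, which (T1)--(T3) and the flip maps of Theorem~\ref{th.BJTV} do not provide — this is exactly the ``further operation'' anticipated just before the statement of Conjecture~\ref{conj.BCEG}. Two natural routes to it are: (i) specialise the column recursion \eqref{eq.XY} to the support $BG$ to obtain a tractable recurrence for the number of $BG^m$-puzzles with prescribed last column, then prove the moment identity by induction on $m=n-1$; or (ii) construct a new weight-preserving bijection on $BG$-puzzles that trades a gap $k$ for a gap $m-k$. Either way, the reduction above isolates precisely what remains to be proved, and the moment identity is readily checked for the first values (e.g.\ $m\le 3$).
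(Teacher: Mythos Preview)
The paper offers no proof of this statement: it is explicitly labelled a \emph{Conjecture}, preceded by the remark that the flip maps of Theorem~\ref{th.BJTV} are insufficient and that ``further operations are to be constructed.'' So there is nothing on the paper's side to compare against beyond that admission.

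Your reduction is correct and goes considerably further than the paper. The structural observation that $C$ and $E$ (and $J$) can occur only as the rightmost piece is sound, the decomposition $BCEG^n=BG^n\sqcup A_C\sqcup A_E$ and its $BEGJ$ analogue are valid, and your extension counts $|A_C|=\sum_\pi\binom{g(\pi)+2}{2}$ and $|A_J|=\sum_\pi\alpha(\pi)(g(\pi)+1)$ check out. The use of the involution (T2)$\circ$(T3) on $BG^{n-1}$ to symmetrise is also correct, and the resulting equivalence
\[
|BCEG^n|=nT_{2n-1}/2^{n-2}\quad\Longleftrightarrow\quad \sum_{\pi\in BG^{m}}\bigl(\delta(\pi)-\alpha(\pi)\bigr)^2 \;=\; m\!\sum_{\pi\in BG^{m}}\bigl(\delta(\pi)-\alpha(\pi)\bigr)\qquad(m=n-1)
\]
is a clean reformulation that the paper does not reach.

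That said, you are entirely upfront that the second-moment identity is the heart of the matter and that you have not proved it. Neither of the two routes you sketch (the column recursion~\eqref{eq.XY} specialised to $BG$, or a gap-swapping bijection) is carried out, so the proposal remains a reduction of one open statement to another open statement. This is genuine progress over the paper's bare conjecture, but it is not a proof, and you should not present it as one: the moment identity is exactly the ``further operation'' the paper anticipates, and establishing it is the whole difficulty.
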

\vspace{\medskipamount}
Other supports $\setP$ giving classical numbers are listed below without proof.
\vspace{\medskipamount}

{\parindent=0pt

{\bf ACX}: Fibonacci numbers;

{\bf AB, ABCD}: double factorial numbers;


{\bf CDW}: Koch snowflake, number of angles after n iterations;

{\bf ACET}: The number of branching configurations of RNA with n or fewer hairpins;

{\bf ACMT}: super-Catalan numbers or little Schroeder numbers;

{\bf ADHN}: Number of permutations of length 2n-1 with no local maxima or minima in even positions;

{\bf AELM}: Number of permutations of length n which avoid the patterns 231, 12534;

{\bf BDFK}: Number of Dyck paths of semilength n having no DUDU's starting at level 1.

}

\section{Where are the secant numbers ?}  
Since the tangent number sequence is a puzzle sequence with support $BCEG$ 
(using a slight modification, see (4.3)), a natural question rises:
what about the
secant numbers $(E_{2n})_{n\geq 2}$:
\begin{equation}\label{eq.secant}
(5, 61, 1385, 50521, 2702765, 199360981, 19391512145,\ldots)\ ?
\end{equation}

The initial motivation was to define a support 
that would generate a
puzzle sequence for 
the secant numbers. We developed a computer programme
to generate the puzzle sequences up to order $|\setP|=4$. Unfortunately,
no secant number sequence could be found.
\vspace{\medskipamount}

Notice that the secant numbers appeared in the puzzle sequence
for the support $BJTV$, but a non-trivial modification is required.
By (\ref{eq.BJTV}) we have
\begin{equation}\label{eq.BJTV.XY}
|BJTV^n|  =
\sum_{X,Y}|BJTV^n[\smatrix{X\cr Y\cr}]|  =
\frac{n}{ 2^{n-2}} \, T_{2n-1}, 
\end{equation}
where 
$BJTV^n[\smatrix{X\cr Y\cr}]$ is the subset of puzzles in $NJTV^n$
such that the rightmost colunm is $[\smatrix{X\cr Y\cr}]$.
In \cite{FHb} we obtained the following identity by using the signed doubloon
model:
\begin{equation}\label{eq.BJTV.Q}
\sum_{X,Y}|BJTV^n[\smatrix{X\cr Y\cr}]| \times Q_n(X,Y)  =
2^{-n}E_{2n}, 
\end{equation}
where 
$$
Q_n(X,Y)=
\begin{cases}
\sum_{k=X}^{Y-1} \binom{2n}{ k} &\text{ if $X<Y$}\cr
\noalign{\vspace{\smallskipamount}
}
0 &\text{ if $X>Y$ }\cr
\end{cases}
$$

However, the secant number sequence (\ref{eq.secant}) does not directly appear in the 
dictionary without the coefficient $Q_n(X,Y)$.
It is possible that order $|\setP|\leq 4$ is not large enough. 
Why do not choose
a bigger suport, for example $\setP=CEHJLPRVX$. The puzzle sequence is
\begin{equation}\label{eq.secant.approx}
(9, 111, 2505, 91961, 4913789, 364074545, 35418898477, 
\ldots)
\end{equation}
We don't obtain the secant sequence (\ref{eq.secant}). It is amusing that
the quotient of (\ref{eq.secant.approx}) by the secant sequence (\ref{eq.secant}) is approximately $1.8$:
\begin{equation*}
(1.800, 1.820, 1.809, 1.820, 1.818, 1.826, 1.827, \ldots)
\end{equation*}
What does it mean ?

%
%
%
%
%
%
%

\section{Dictionary of the standard puzzle sequences}\label{sec.dict}

{\parindent=0pt
\vspace{\medskipamount}

\goodbreak
\centerline{\bf Sequence identified for $|\setP|=1$}
\vspace{\medskipamount}

\goodbreak{\bf  A }\quad $\{\piece 1234\}$
\quad See also 
{\bf D}
\vspace{\smallskipamount}

{Seq= 1, 1, 1, 1, 1, 1, 1, 1, 1, 1, 1 }

OEIS:  \seqnum{A000012} [1450] The simplest sequence of positive numbers: the all 1's sequence.
\vspace{\medskipamount}

\goodbreak{\bf  B }\quad $\{\piece 1243\}$
\vspace{\smallskipamount}

{Seq= 1, 2, 5, 14, 42, 132, 429, 1430, 4862, 16796, 58786 }

OEIS:  \seqnum{A000108} [5] Catalan numbers: C(n) = binomial(2n,n)/(n+1) = (2n)!/(n!(n+1)!). Also called Segner numbers.
\vspace{\medskipamount}

\goodbreak{\bf  C }\quad $\{\piece 1324\}$
\quad See also 
{\bf E}, {\bf F}
\vspace{\smallskipamount}

{Seq= 1, 0, 0, 0, 0, 0, 0, 0, 0, 0, 0 }

OEIS:  \seqnum{A121373} [1470] Expansion of f(q) = f(q, -q\^{}2) in powers of q where f(q,r) is the Ramanujan two variable theta function.
\vspace{\medskipamount}

\goodbreak
\centerline{\bf Sequence identified for $|\setP|=2$}
\vspace{\medskipamount}

\goodbreak{\bf  AB }\quad $\{\piece 1234, \piece 1243\}$
\vspace{\smallskipamount}

{Seq= 2, 8, 48, 384, 3840, 46080, 645120, 10321920, 185794560, 3715891200 }

OEIS:  \seqnum{A000165} [1] Double factorial numbers: (2n)!! = 2\^{}n*n!.
\vspace{\medskipamount}

\goodbreak{\bf  AC }\quad $\{\piece 1234, \piece 1324\}$
\quad See also 
{\bf AK}, {\bf AX}, {\bf CD}, {\bf CX}, {\bf DE}, {\bf DF}, {\bf EK}, {\bf FK}
\vspace{\smallskipamount}

{Seq= 2, 2, 2, 2, 2, 2, 2, 2, 2, 2, 2 }

OEIS:  \seqnum{A055642} [327] Number of digits in decimal expansion of n.
\vspace{\medskipamount}

\goodbreak{\bf  AD }\quad $\{\piece 1234, \piece 1342\}$
\vspace{\smallskipamount}

{Seq= 2, 6, 23, 106, 567, 3434, 23137, 171174, 1376525, 11934581, 110817423 }

OEIS:  \seqnum{A125273} [1] Eigensequence of triangle A085478: a(n) = Sum\_\{k=0..n-1\} A085478(n-1,k)*a(k) for n$>$0 with a(0)=1.
\vspace{\medskipamount}

\goodbreak{\bf  AE }\quad $\{\piece 1234, \piece 1423\}$
\vspace{\smallskipamount}

{Seq= 2, 3, 4, 5, 6, 7, 8, 9, 10, 11, 12 }

OEIS:  \seqnum{A000027} [618] The natural numbers. Also called the whole numbers, the counting numbers or the positive integers.
\vspace{\medskipamount}

\goodbreak{\bf  AF }\quad $\{\piece 1234, \piece 1432\}$
\vspace{\smallskipamount}

{Seq= 2, 4, 7, 11, 16, 22, 29, 37, 46, 56, 67 }

OEIS:  \seqnum{A000124} [6] Central polygonal numbers (the Lazy Caterer's sequence): n(n+1)/2 + 1; or, maximal number of pieces formed when slicing a pa...
\vspace{\medskipamount}

\goodbreak{\bf  AL }\quad $\{\piece 1234, \piece 2413\}$
\vspace{\smallskipamount}

{Seq= 2, 5, 10, 17, 26, 37, 50, 65, 82, 101, 122 }

OEIS:  \seqnum{A002522} [2] n\^{}2 + 1.
\vspace{\medskipamount}

\goodbreak{\bf  AP }\quad $\{\piece 1234, \piece 3142\}$
\quad See also 
{\bf DQ}
\vspace{\smallskipamount}

{Seq= 2, 7, 16, 29, 46, 67, 92, 121, 154, 191, 232 }

OEIS:  \seqnum{A130883} [1] 2n\^{}2-n+1.
\vspace{\medskipamount}

\goodbreak{\bf  AR }\quad $\{\piece 1234, \piece 3241\}$
\quad See also 
{\bf CM}, {\bf DJ}
\vspace{\smallskipamount}

{Seq= 2, 4, 6, 8, 10, 12, 14, 16, 18, 20, 22 }

OEIS:  \seqnum{A005843} [25] The even numbers: a(n) = 2n.
\vspace{\medskipamount}

\goodbreak{\bf  BC }\quad $\{\piece 1243, \piece 1324\}$
\quad See also 
{\bf BD}
\vspace{\smallskipamount}

{Seq= 2, 5, 14, 42, 132, 429, 1430, 4862, 16796, 58786, 208012 }

OEIS:  \seqnum{A000108} [5] Catalan numbers: C(n) = binomial(2n,n)/(n+1) = (2n)!/(n!(n+1)!). Also called Segner numbers.
\vspace{\medskipamount}

\goodbreak{\bf  BE }\quad $\{\piece 1243, \piece 1423\}$
\quad See also 
{\bf ER}
\vspace{\smallskipamount}

{Seq= 2, 4, 10, 28, 84, 264, 858, 2860, 9724, 33592, 117572 }

OEIS:  \seqnum{A068875} [2] Expansion of (1+x*C)*C, where C = (1-(1-4*x)\^{}(1/2))/(2*x) is g.f. for Catalan numbers, A000108.
\vspace{\medskipamount}

\goodbreak{\bf  BF }\quad $\{\piece 1243, \piece 1432\}$
\vspace{\smallskipamount}

{Seq= 2, 3, 7, 19, 56, 174, 561, 1859, 6292, 21658, 75582 }

OEIS:  \seqnum{A005807} [2] Sum of adjacent Catalan numbers.
\vspace{\medskipamount}

\goodbreak{\bf  BG }\quad $\{\piece 1243, \piece 2134\}$
\vspace{\smallskipamount}

{Seq= 2, 12, 136, 2480, 66336, 2446528, 118984832, 7378078464, 568142287360 }

OEIS:  \seqnum{A117513} [1] Number of ways of arranging 2n tokens in a row, with 2 copies of each token from 1 through n, such that between every pair o...
\vspace{\medskipamount}

\goodbreak{\bf  BJ }\quad $\{\piece 1243, \piece 2314\}$
\vspace{\smallskipamount}

{Seq= 2, 6, 22, 84, 324, 1254, 4862, 18876, 73372, 285532, 1112412 }

OEIS:  \seqnum{A121686} [1] Number of branches in all binary trees with n edges. A binary tree is a rooted tree in which each vertex has at most two chi...
\vspace{\medskipamount}

\goodbreak{\bf  BL }\quad $\{\piece 1243, \piece 2413\}$
\vspace{\smallskipamount}

{Seq= 2, 4, 12, 40, 140, 504, 1848, 6864, 25740, 97240, 369512 }

OEIS:  \seqnum{A028329} [1] Twice central binomial coefficients.
\vspace{\medskipamount}

\goodbreak
\centerline{\bf Sequence unknown for $|\setP|=2$}
\vspace{\medskipamount}

\goodbreak{\bf  AH }\quad $\{\piece 1234, \piece 2143\}$
\vspace{\smallskipamount}

{Seq= 2, 12, 132, 2372, 62304, 2261668, 108184432, 6600715188, 500046044352 }
\vspace{\medskipamount}

\goodbreak{\bf  BN }\quad $\{\piece 1243, \piece 3124\}$
\vspace{\smallskipamount}

{Seq= 2, 9, 74, 974, 18831, 502459, 17671764, 792391014, 44129928926, 2987912108763 }
\vspace{\medskipamount}

\goodbreak{\bf  BQ }\quad $\{\piece 1243, \piece 3214\}$
\vspace{\smallskipamount}

{Seq= 2, 5, 21, 96, 440, 1989, 8855, 38896, 168948, 727090, 3105322 }
\vspace{\medskipamount}

\goodbreak{\bf  CK }\quad $\{\piece 1324, \piece 2341\}$
\vspace{\smallskipamount}

{Seq= 2, 4, 8, 26, 66, 276, 816, 4050, 13410, 75780, 274680 }
\vspace{\medskipamount}

\goodbreak{\bf  CP }\quad $\{\piece 1324, \piece 3142\}$
\vspace{\smallskipamount}

{Seq= 2, 10, 82, 1162, 23026, 657148, 23719394, 1137763610, 65032729314 }
\vspace{\medskipamount}

\goodbreak{\bf  CR }\quad $\{\piece 1324, \piece 3241\}$
\vspace{\smallskipamount}

{Seq= 2, 5, 18, 91, 563, 4299, 37686, 384543, 4357567, 55614775, 772479331 }
\vspace{\medskipamount}

\goodbreak{\bf  DN }\quad $\{\piece 1342, \piece 3124\}$
\vspace{\smallskipamount}

{Seq= 2, 14, 168, 3352, 96816, 3875904, 204185344, 13726330128, 1145508631264 }
\vspace{\medskipamount}

\goodbreak{\bf  EU }\quad $\{\piece 1423, \piece 4123\}$
\vspace{\smallskipamount}

{Seq= 2, 6, 26, 150, 1230, 10038, 125490, 1292166, 22184550, 271843110, 6022023210 }
\vspace{\medskipamount}

\goodbreak{\bf  EV }\quad $\{\piece 1423, \piece 4132\}$
\vspace{\smallskipamount}

{Seq= 2, 8, 58, 712, 12564, 310256, 10025978, 415159208, 21288518044, 1329526717840 }
\vspace{\medskipamount}

\goodbreak{\bf  FU }\quad $\{\piece 1432, \piece 4123\}$
\vspace{\smallskipamount}

{Seq= 2, 12, 106, 1680, 37434, 1171968, 48008850, 2516016384, 163509808050 }
\vspace{\medskipamount}

\goodbreak
\centerline{\bf Sequence identified for $|\setP|=3$}
\vspace{\medskipamount}

\goodbreak{\bf  ABC }\quad $\{\piece 1234, \piece 1243, \piece 1324\}$
\quad See also 
{\bf ABE}, {\bf ABF}
\vspace{\smallskipamount}

{Seq= 3, 12, 72, 576, 5760, 69120, 967680, 15482880, 278691840, 5573836800 }

OEIS:  \seqnum{A052676} [1] A simple regular expression in a labeled universe.
\vspace{\medskipamount}

\goodbreak{\bf  ABD }\quad $\{\piece 1234, \piece 1243, \piece 1342\}$
\vspace{\smallskipamount}

{Seq= 3, 15, 105, 945, 10395, 135135, 2027025, 34459425, 654729075, 13749310575 }

OEIS:  \seqnum{A001147} [1] Double factorial numbers: (2n-1)!! = 1.3.5....(2n-1).
\vspace{\medskipamount}

\goodbreak{\bf  ABK }\quad $\{\piece 1234, \piece 1243, \piece 2341\}$
\vspace{\smallskipamount}

{Seq= 3, 10, 56, 432, 4224, 49920, 691200, 10967040, 196116480, 3901685760 }

{Var= 3, 10, 28, 72, 176, 416, 960, 2176, 4864, 10752, 23552 }

OEIS:  \seqnum{A128135} [1] Row sums of A128134.
\vspace{\medskipamount}

\goodbreak{\bf  ACE }\quad $\{\piece 1234, \piece 1324, \piece 1423\}$
\quad See also 
{\bf ACZ}, {\bf AKM}, {\bf AKS}, {\bf AMX}, {\bf DEM}, {\bf DFM}
\vspace{\smallskipamount}

{Seq= 3, 4, 5, 6, 7, 8, 9, 10, 11, 12, 13 }

OEIS:  \seqnum{A000027} [527] The natural numbers. Also called the whole numbers, the counting numbers or the positive integers.
\vspace{\medskipamount}

\goodbreak{\bf  ACF }\quad $\{\piece 1234, \piece 1324, \piece 1432\}$
\vspace{\smallskipamount}

{Seq= 3, 5, 8, 12, 17, 23, 30, 38, 47, 57, 68 }

OEIS:  \seqnum{A022856} [4] a(n) = n-2 + Sum of a(i+1)mod(a(i)) for i = 1 to n-2, for n $\geq$ 3.
\vspace{\medskipamount}

\goodbreak{\bf  ACL }\quad $\{\piece 1234, \piece 1324, \piece 2413\}$
\quad See also 
{\bf AMR}, {\bf CDM}
\vspace{\smallskipamount}

{Seq= 3, 6, 11, 18, 27, 38, 51, 66, 83, 102, 123 }

OEIS:  \seqnum{A059100} [4] n\^{}2+2.
\vspace{\medskipamount}

\goodbreak{\bf  ACM }\quad $\{\piece 1234, \piece 1324, \piece 2431\}$
\quad See also 
{\bf AFM}, {\bf AFZ}
\vspace{\smallskipamount}

{Seq= 3, 6, 14, 35, 90, 234, 611, 1598, 4182, 10947, 28658 }

OEIS:  \seqnum{A032908} [2] One of 4 3rd-order recurring sequences for which the first derived sequence and the Galois transformed sequence coincide.
\vspace{\medskipamount}

\goodbreak{\bf  ACS }\quad $\{\piece 1234, \piece 1324, \piece 3412\}$
\quad See also 
{\bf ATX}
\vspace{\smallskipamount}

{Seq= 3, 7, 21, 71, 253, 925, 3433, 12871, 48621, 184757, 705433 }

{Var= 0, 4, 18, 68, 250, 922, 3430, 12868, 48618, 184754, 705430 }

OEIS:  \seqnum{A115112} [1] Number of different ways to select n elements from two sets of n elements under the precondition of choosing at least one el...
\vspace{\medskipamount}

\goodbreak{\bf  ACT }\quad $\{\piece 1234, \piece 1324, \piece 3421\}$
\vspace{\smallskipamount}

{Seq= 3, 6, 17, 62, 259, 1162, 5441, 26234, 129283, 648142, 3294865 }

{Var= 2, 5, 16, 61, 258, 1161, 5440, 26233, 129282, 648141, 3294864 }

OEIS:  \seqnum{A104858} [1] Partial sums of the little Schroeder numbers (A001003).
\vspace{\medskipamount}

\goodbreak{\bf  ACX }\quad $\{\piece 1234, \piece 1324, \piece 4231\}$
\quad See also 
{\bf DEK}, {\bf DFK}
\vspace{\smallskipamount}

{Seq= 3, 5, 8, 13, 21, 34, 55, 89, 144, 233, 377 }

OEIS:  \seqnum{A000045} [15] Fibonacci numbers: F(n) = F(n-1) + F(n-2), F(0) = 0, F(1) = 1, F(2) = 1, ...
\vspace{\medskipamount}

\goodbreak{\bf  AEF }\quad $\{\piece 1234, \piece 1423, \piece 1432\}$
\quad See also 
{\bf AEQ}
\vspace{\smallskipamount}

{Seq= 3, 6, 10, 15, 21, 28, 36, 45, 55, 66, 78 }

OEIS:  \seqnum{A000217} [29] Triangular numbers: a(n) = C(n+1,2) = n(n+1)/2 = 0+1+2+...+n.
\vspace{\medskipamount}

\goodbreak{\bf  AEJ }\quad $\{\piece 1234, \piece 1423, \piece 2314\}$
\quad See also 
{\bf AKR}, {\bf AKV}, {\bf ARX}, {\bf CDJ}, {\bf CDL}, {\bf CEM}, {\bf CFM}, {\bf DEJ}, {\bf DFJ}
\vspace{\smallskipamount}

{Seq= 3, 5, 7, 9, 11, 13, 15, 17, 19, 21, 23 }

OEIS:  \seqnum{A005408} [34] The odd numbers: a(n) = 2n+1.
\vspace{\medskipamount}

\goodbreak{\bf  AEL }\quad $\{\piece 1234, \piece 1423, \piece 2413\}$
\quad See also 
{\bf AFQ}
\vspace{\smallskipamount}

{Seq= 3, 7, 13, 21, 31, 43, 57, 73, 91, 111, 133 }

OEIS:  \seqnum{A002061} [2] Central polygonal numbers: n\^{}2 - n + 1.
\vspace{\medskipamount}

\goodbreak{\bf  AEM }\quad $\{\piece 1234, \piece 1423, \piece 2431\}$
\vspace{\smallskipamount}

{Seq= 3, 5, 10, 23, 57, 146, 379, 989, 2586, 6767, 17713 }

{Var= 0, 2, 7, 20, 54, 143, 376, 986, 2583, 6764, 17710 }

OEIS:  \seqnum{A035508} [2] Fibonacci(2n+2)-1.
\vspace{\medskipamount}

\goodbreak{\bf  AES }\quad $\{\piece 1234, \piece 1423, \piece 3412\}$
\vspace{\smallskipamount}

{Seq= 3, 6, 17, 58, 212, 794, 3005, 11442, 43760, 167962, 646648 }

{Var= 1, 4, 15, 56, 210, 792, 3003, 11440, 43758, 167960, 646646 }

OEIS:  \seqnum{A001791} [2] Binomial coefficients C(2n,n-1).
\vspace{\medskipamount}

\goodbreak{\bf  AEW }\quad $\{\piece 1234, \piece 1423, \piece 4213\}$
\vspace{\smallskipamount}

{Seq= 3, 7, 22, 73, 237, 746, 2287, 6867, 20286, 59157, 170713 }

{Var= 1, 5, 20, 71, 235, 744, 2285, 6865, 20284, 59155, 170711 }

OEIS:  \seqnum{A054444} [1] Even indexed members of A001629(n), n $\geq$ 2, (Fibonacci convolution).
\vspace{\medskipamount}

\goodbreak{\bf  AEZ }\quad $\{\piece 1234, \piece 1423, \piece 4321\}$
\vspace{\smallskipamount}

{Seq= 3, 7, 19, 52, 140, 372, 981, 2577, 6757, 17702, 46358 }

{Var= 0, 4, 16, 49, 137, 369, 978, 2574, 6754, 17699, 46355 }

OEIS:  \seqnum{A114185} [1] F(2n)-n-1, where F(n)=Fibonacci number.
\vspace{\medskipamount}

\goodbreak{\bf  AFL }\quad $\{\piece 1234, \piece 1432, \piece 2413\}$
\vspace{\smallskipamount}

{Seq= 3, 8, 16, 27, 41, 58, 78, 101, 127, 156, 188 }

OEIS:  \seqnum{A104249} [1] (3*n\^{}2+n+2)/2.
\vspace{\medskipamount}

\goodbreak{\bf  AFS }\quad $\{\piece 1234, \piece 1432, \piece 3412\}$
\vspace{\smallskipamount}

{Seq= 3, 8, 31, 132, 564, 2382, 9951, 41228, 169768, 695862, 2842228 }

{Var= 1, 6, 29, 130, 562, 2380, 9949, 41226, 169766, 695860, 2842226 }

OEIS:  \seqnum{A008549} [1] Number of ways of choosing at most n-1 items from a set of size 2n+1.
\vspace{\medskipamount}

\goodbreak{\bf  AKP }\quad $\{\piece 1234, \piece 2341, \piece 3142\}$
\quad See also 
{\bf APX}, {\bf CDQ}, {\bf DEQ}, {\bf DFQ}
\vspace{\smallskipamount}

{Seq= 3, 8, 17, 30, 47, 68, 93, 122, 155, 192, 233 }

OEIS:  \seqnum{A033816} [1] 2n\^{}2 + 3n + 3.
\vspace{\medskipamount}

\goodbreak{\bf  AKT }\quad $\{\piece 1234, \piece 2341, \piece 3421\}$
\quad See also 
{\bf BFM}
\vspace{\smallskipamount}

{Seq= 3, 5, 10, 24, 66, 198, 627, 2057, 6919, 23715, 82501 }

OEIS:  \seqnum{A155587} [1] Expansion of (1+x*c(x))/(1-x), c(x) the g.f. of A000108.
\vspace{\medskipamount}

\goodbreak{\bf  AKU }\quad $\{\piece 1234, \piece 2341, \piece 4123\}$
\quad See also 
{\bf AKX}, {\bf CDE}, {\bf CDF}, {\bf DEF}
\vspace{\smallskipamount}

{Seq= 3, 3, 3, 3, 3, 3, 3, 3, 3, 3, 3 }

OEIS:  \seqnum{A010701} [202] Constant sequence.
\vspace{\medskipamount}

\goodbreak{\bf  AKW }\quad $\{\piece 1234, \piece 2341, \piece 4213\}$
\quad See also 
{\bf APS}, {\bf DFW}
\vspace{\smallskipamount}

{Seq= 3, 9, 33, 129, 513, 2049, 8193, 32769, 131073, 524289, 2097153 }

OEIS:  \seqnum{A084508} [2] Partial sums of A084509. Positions of ones in the first differences of A084506.
\vspace{\medskipamount}

\goodbreak{\bf  AMV }\quad $\{\piece 1234, \piece 2431, \piece 4132\}$
\quad See also 
{\bf ARS}
\vspace{\smallskipamount}

{Seq= 3, 8, 31, 126, 509, 2044, 8187, 32762, 131065, 524280, 2097143 }

{Var= 0, 5, 28, 123, 506, 2041, 8184, 32759, 131062, 524277, 2097140 }

OEIS:  \seqnum{A124133} [1] a(n)=(-1/2)*sum\_\{i1+i2+i3=2n\} ((2*n)!/(i1! i2! i3!))*B(i1) where B are the Bernoulli numbers.
\vspace{\medskipamount}

\goodbreak{\bf  APR }\quad $\{\piece 1234, \piece 3142, \piece 3241\}$
\quad See also 
{\bf CFW}, {\bf DJQ}
\vspace{\smallskipamount}

{Seq= 3, 10, 21, 36, 55, 78, 105, 136, 171, 210, 253 }

OEIS:  \seqnum{A014105} [1] Second hexagonal numbers: n(2n+1).
\vspace{\medskipamount}

\goodbreak{\bf  ART }\quad $\{\piece 1234, \piece 3241, \piece 3421\}$
\quad See also 
{\bf BET}
\vspace{\smallskipamount}

{Seq= 3, 8, 25, 84, 294, 1056, 3861, 14300, 53482, 201552, 764218 }

OEIS:  \seqnum{A038665} [1] Convolution of A007054 (super ballot numbers) with A000984 (central binomial coefficients).
\vspace{\medskipamount}

\goodbreak{\bf  ARV }\quad $\{\piece 1234, \piece 3241, \piece 4132\}$
\quad See also 
{\bf CEW}
\vspace{\smallskipamount}

{Seq= 3, 7, 11, 15, 19, 23, 27, 31, 35, 39, 43 }

OEIS:  \seqnum{A004767} [7] 4n+3.
\vspace{\medskipamount}

\goodbreak{\bf  BCD }\quad $\{\piece 1243, \piece 1324, \piece 1342\}$
\vspace{\smallskipamount}

{Seq= 3, 9, 28, 90, 297, 1001, 3432, 11934, 41990, 149226, 534888 }

OEIS:  \seqnum{A000245} [3] 3(2n)!/((n+2)!(n-1)!).
\vspace{\medskipamount}

\goodbreak{\bf  BCE }\quad $\{\piece 1243, \piece 1324, \piece 1423\}$
\quad See also 
{\bf BDF}, {\bf BEK}
\vspace{\smallskipamount}

{Seq= 3, 7, 19, 56, 174, 561, 1859, 6292, 21658, 75582, 266798 }

OEIS:  \seqnum{A005807} [3] Sum of adjacent Catalan numbers.
\vspace{\medskipamount}

\goodbreak{\bf  BDE }\quad $\{\piece 1243, \piece 1342, \piece 1423\}$
\vspace{\smallskipamount}

{Seq= 3, 8, 23, 70, 222, 726, 2431, 8294, 28730, 100776, 357238 }

OEIS:  \seqnum{A000782} [2] 2*Catalan(n)-Catalan(n-1).
\vspace{\medskipamount}

\goodbreak{\bf  BDL }\quad $\{\piece 1243, \piece 1342, \piece 2413\}$
\vspace{\smallskipamount}

{Seq= 3, 10, 35, 126, 462, 1716, 6435, 24310, 92378, 352716, 1352078 }

OEIS:  \seqnum{A001700} [3] C(2n+1, n+1): number of ways to put n+1 indistinguishable balls into n+1 distinguishable boxes = number of (n+1)-st degree m...
\vspace{\medskipamount}

\goodbreak{\bf  BEM }\quad $\{\piece 1243, \piece 1423, \piece 2431\}$
\vspace{\smallskipamount}

{Seq= 3, 6, 14, 37, 107, 329, 1055, 3486, 11780, 40510, 141286 }

OEIS:  \seqnum{A081293} [1] a(n) = A000108(n)+A014137(n).
\vspace{\medskipamount}

\goodbreak{\bf  BFL }\quad $\{\piece 1243, \piece 1432, \piece 2413\}$
\vspace{\smallskipamount}

{Seq= 3, 5, 14, 45, 154, 546, 1980, 7293, 27170, 102102, 386308 }

OEIS:  \seqnum{A078718} [1] Let f(i,j) = Sum(binom(2*i,k)*binom(2*j,i+j-k)*(-1)\^{}(i+j-k),k=0..2*i) (this is essentially the same as the triangle in...
\vspace{\medskipamount}

\goodbreak{\bf  BFT }\quad $\{\piece 1243, \piece 1432, \piece 3421\}$
\vspace{\smallskipamount}

{Seq= 3, 6, 15, 42, 126, 396, 1287, 4290, 14586, 50388, 176358 }

OEIS:  \seqnum{A120589} [1] Self-convolution of A120588, such that a(n) = 3*A120588(n) for n$\geq$2.
\vspace{\medskipamount}

\goodbreak{\bf  BLM }\quad $\{\piece 1243, \piece 2413, \piece 2431\}$
\vspace{\smallskipamount}

{Seq= 3, 6, 16, 51, 177, 639, 2355, 8790, 33100, 125478, 478194 }

{Var= 1, 4, 14, 49, 175, 637, 2353, 8788, 33098, 125476, 478192 }

OEIS:  \seqnum{A079309} [1] a(n) = C(1,1)+C(3,2)+C(5,3)+...+C(2n-1,n).
\vspace{\medskipamount}

\goodbreak{\bf  CDW }\quad $\{\piece 1324, \piece 1342, \piece 4213\}$
\quad See also 
{\bf DEW}
\vspace{\smallskipamount}

{Seq= 3, 6, 18, 66, 258, 1026, 4098, 16386, 65538, 262146, 1048578 }

OEIS:  \seqnum{A178789} [1] Koch snowflake: number of angles after n iterations.
\vspace{\medskipamount}

\goodbreak{\bf  DJM }\quad $\{\piece 1342, \piece 2314, \piece 2431\}$
\vspace{\smallskipamount}

{Seq= 3, 8, 21, 46, 87, 148, 233, 346, 491, 672, 893 }

OEIS:  \seqnum{A179903} [1] (1, 3, 5, 7, 9...) convolved with (1, 0, 3, 5, 7, 9,...)
\vspace{\medskipamount}

\goodbreak{\bf  EFK }\quad $\{\piece 1423, \piece 1432, \piece 2341\}$
\vspace{\smallskipamount}

{Seq= 3, 4, 6, 8, 12, 16, 24, 32, 48, 64, 96 }

OEIS:  \seqnum{A029744} [8] Numbers of the form 2\^{}n or 3*2\^{}n.
\vspace{\medskipamount}

\goodbreak
\centerline{\bf Sequence unknown for $|\setP|=3$ (partial)}
\vspace{\medskipamount}

\goodbreak{\bf  ABG }\quad $\{\piece 1234, \piece 1243, \piece 2134\}$
\vspace{\smallskipamount}

{Seq= 3, 23, 327, 7465, 249885, 11532671, 701867995, 54461600179, 5247921916235 }
\vspace{\medskipamount}

\goodbreak{\bf  ABH }\quad $\{\piece 1234, \piece 1243, \piece 2143\}$
\vspace{\smallskipamount}

{Seq= 3, 24, 345, 7920, 264873, 12190108, 739050425, 57109234080, 5479466654645 }
\vspace{\medskipamount}

\goodbreak{\bf  ABJ }\quad $\{\piece 1234, \piece 1243, \piece 2314\}$
\quad See also 
{\bf ABL}
\vspace{\smallskipamount}

{Seq= 3, 14, 96, 858, 9420, 122490, 1839600, 31325490, 596291220, 12546094050 }
\vspace{\medskipamount}

\goodbreak{\bf  ABN }\quad $\{\piece 1234, \piece 1243, \piece 3124\}$
\vspace{\smallskipamount}

{Seq= 3, 19, 217, 3985, 107547, 4001027, 196224625, 12270923649, 953000374835 }
\vspace{\medskipamount}

\goodbreak{\bf  ABP }\quad $\{\piece 1234, \piece 1243, \piece 3142\}$
\vspace{\smallskipamount}

{Seq= 3, 17, 116, 1048, 11712, 155520, 2388480, 41610240, 810270720, 17433722880 }
\vspace{\medskipamount}

\goodbreak{\bf  ABQ }\quad $\{\piece 1234, \piece 1243, \piece 3214\}$
\vspace{\smallskipamount}

{Seq= 3, 14, 104, 1020, 12264, 173580, 2818080, 51535260, 1047274200, 23400192060 }
\vspace{\medskipamount}

$\quad\vdots$

\goodbreak
\centerline{\bf Sequence identified for $|\setP|=4$}
\vspace{\medskipamount}

\goodbreak{\bf  ABCD }\quad $\{\piece 1234, \piece 1243, \piece 1324, \piece 1342\}$
\quad See also 
{\bf ABDE}, {\bf ABDF}
\vspace{\smallskipamount}

{Seq= 4, 20, 140, 1260, 13860, 180180, 2702700, 45945900, 872972100, 18332414100 }

{Var= 1, 5, 35, 315, 3465, 45045, 675675, 11486475, 218243025, 4583103525 }

OEIS:  \seqnum{A051577} [2] (2*n+3)!!/3, related to A001147 (odd double factorials).
\vspace{\medskipamount}

\goodbreak{\bf  ABCE }\quad $\{\piece 1234, \piece 1243, \piece 1324, \piece 1423\}$
\quad See also 
{\bf ABCF}, {\bf ABCM}, {\bf ABCT}, {\bf ABEF}, {\bf ABEZ}, {\bf ABRX}, {\bf ABRZ}, {\bf ABTX}, {\bf ACER}, {\bf ACRT}, {\bf AERX}, {\bf AERZ}, {\bf AETX}, {\bf BCEX}, {\bf BCRX}, {\bf BCTX}, {\bf CERX}
\vspace{\smallskipamount}

{Seq= 4, 16, 96, 768, 7680, 92160, 1290240, 20643840, 371589120, 7431782400 }

OEIS:  \seqnum{A032184} [2] "CIJ" (necklace, indistinct, labeled) transform of 1,3,5,7...
\vspace{\medskipamount}

\goodbreak{\bf  ABCJ }\quad $\{\piece 1234, \piece 1243, \piece 1324, \piece 2314\}$
\qquad See also 
{\bf ABCK}, {\bf ABCL}, {\bf ABDK}, {\bf ABEJ}, {\bf ABEL}, {\bf ABFJ}, {\bf ABFL}
\vspace{\smallskipamount}

{Seq= 4, 18, 120, 1050, 11340, 145530, 2162160, 36486450, 689188500, 14404039650 }

{Var= 2, 6, 30, 210, 1890, 20790, 270270, 4054050, 68918850, 1309458150 }

OEIS:  \seqnum{A097801} [2] (2*n)!/(n!*2\^{}(n-1)).
\vspace{\medskipamount}

\goodbreak{\bf  ABCZ }\quad $\{\piece 1234, \piece 1243, \piece 1324, \piece 4321\}$
\quad See also 
{\bf ABEM}, {\bf ABFM}, {\bf ABFZ}
\vspace{\smallskipamount}

{Seq= 4, 14, 80, 634, 6332, 75974, 1063624, 17017970, 306323444, 6126468862 }

{Var= 3, 13, 79, 633, 6331, 75973, 1063623, 17017969, 306323443, 6126468861 }

OEIS:  \seqnum{A010844} [1] a(n) = 2*n*a(n-1) + 1 with a(0)=1.
\vspace{\medskipamount}

\goodbreak{\bf  ABDJ }\quad $\{\piece 1234, \piece 1243, \piece 1342, \piece 2314\}$
\quad See also 
{\bf ABDL}
\vspace{\smallskipamount}

{Seq= 4, 24, 192, 1920, 23040, 322560, 5160960, 92897280, 1857945600, 40874803200 }

OEIS:  \seqnum{A002866} [3] a(0) = 1; for n$>$0, a(n) = 2\^{}(n-1)*n!.
\vspace{\medskipamount}

\goodbreak{\bf  ABDP }\quad $\{\piece 1234, \piece 1243, \piece 1342, \piece 3142\}$
\quad See also 
{\bf ABDU}, {\bf ABDV}
\vspace{\smallskipamount}

{Seq= 4, 25, 210, 2205, 27720, 405405, 6756750, 126351225, 2618916300, 59580345825 }

{Var= 1, 5, 35, 315, 3465, 45045, 675675, 11486475, 218243025, 4583103525 }

OEIS:  \seqnum{A051577} [2] (2*n+3)!!/3, related to A001147 (odd double factorials).
\vspace{\medskipamount}

\goodbreak{\bf  ABKM }\quad $\{\piece 1234, \piece 1243, \piece 2341, \piece 2431\}$
\quad See also 
{\bf ABKS}
\vspace{\smallskipamount}

{Seq= 4, 12, 60, 444, 4284, 50364, 695484, 11017404, 196811964, 3912703164 }

OEIS:  \seqnum{A004400} [1] 1 + Sum 2\^{}k k!, k = 1 . . n.
\vspace{\medskipamount}

\goodbreak{\bf  ABMX }\quad $\{\piece 1234, \piece 1243, \piece 2431, \piece 4231\}$
\vspace{\smallskipamount}

{Seq= 4, 13, 73, 577, 5761, 69121, 967681, 15482881, 278691841, 5573836801 }

{Var= 3, 12, 72, 576, 5760, 69120, 967680, 15482880, 278691840, 5573836800 }

OEIS:  \seqnum{A052676} [1] A simple regular expression in a labeled universe.
\vspace{\medskipamount}

\goodbreak{\bf  ACDF }\quad $\{\piece 1234, \piece 1324, \piece 1342, \piece 1432\}$
\quad See also 
{\bf ACDM}, {\bf ACFK}, {\bf ACKM}, {\bf ADFM}, {\bf ADFZ}, {\bf ADKX}, {\bf ADKZ}, {\bf ADMX}, {\bf AFKX}, {\bf AFKZ}, {\bf AFMX}, {\bf CDFX}, {\bf CDKX}, {\bf CDMX}, {\bf CFKX}
\vspace{\smallskipamount}

{Seq= 4, 12, 46, 212, 1134, 6868, 46274, 342348, 2753050, 23869162, 221634846 }

{Var= 2, 6, 23, 106, 567, 3434, 23137, 171174, 1376525, 11934581, 110817423 }

OEIS:  \seqnum{A125273} [1] Eigensequence of triangle A085478: a(n) = Sum\_\{k=0..n-1\} A085478(n-1,k)*a(k) for n$>$0 with a(0)=1.
\vspace{\medskipamount}

\goodbreak{\bf  ACEF }\quad $\{\piece 1234, \piece 1324, \piece 1423, \piece 1432\}$
\quad See also 
{\bf ACEQ}
\vspace{\smallskipamount}

{Seq= 4, 7, 11, 16, 22, 29, 37, 46, 56, 67, 79 }

OEIS:  \seqnum{A000124} [7] Central polygonal numbers (the Lazy Caterer's sequence): n(n+1)/2 + 1; or, maximal number of pieces formed when slicing a pa...
\vspace{\medskipamount}

\goodbreak{\bf  ACEJ }\quad $\{\piece 1234, \piece 1324, \piece 1423, \piece 2314\}$
\quad See also 
{\bf AKMX}, {\bf AKRU}, {\bf AKRX}, {\bf AKSU}, {\bf AKVX}, {\bf CDEJ}, {\bf CDEL}, {\bf CDFJ}, {\bf CDFL}, {\bf CEFM}, {\bf DEFJ}, {\bf DEFM}
\vspace{\smallskipamount}

{Seq= 4, 6, 8, 10, 12, 14, 16, 18, 20, 22, 24 }

OEIS:  \seqnum{A005843} [41] The even numbers: a(n) = 2n.
\vspace{\medskipamount}

\goodbreak{\bf  ACEL }\quad $\{\piece 1234, \piece 1324, \piece 1423, \piece 2413\}$
\quad See also 
{\bf ACFQ}, {\bf AKMR}, {\bf AMRX}, {\bf CDEM}, {\bf CDFM}
\vspace{\smallskipamount}

{Seq= 4, 8, 14, 22, 32, 44, 58, 74, 92, 112, 134 }

OEIS:  \seqnum{A014206} [2] n\^{}2+n+2.
\vspace{\medskipamount}

\goodbreak{\bf  ACEM }\quad $\{\piece 1234, \piece 1324, \piece 1423, \piece 2431\}$
\quad See also 
{\bf ACEZ}, {\bf AEFM}
\vspace{\smallskipamount}

{Seq= 4, 9, 22, 56, 145, 378, 988, 2585, 6766, 17712, 46369 }

OEIS:  \seqnum{A055588} [2] a(n)=3a(n-1)-a(n-2)-1; a(0)=1, a(1)=2.
\vspace{\medskipamount}

\goodbreak{\bf  ACES }\quad $\{\piece 1234, \piece 1324, \piece 1423, \piece 3412\}$
\quad See also 
{\bf AMTX}, {\bf BCFM}
\vspace{\smallskipamount}

{Seq= 4, 11, 36, 127, 463, 1717, 6436, 24311, 92379, 352717, 1352079 }

OEIS:  \seqnum{A112849} [1] Number of congruence classes (epimorphisms/vertex partitionings induced by graph endomorphisms) of undirected cycles of even...
\vspace{\medskipamount}

\goodbreak{\bf  ACET }\quad $\{\piece 1234, \piece 1324, \piece 1423, \piece 3421\}$
\vspace{\smallskipamount}

{Seq= 4, 10, 32, 122, 516, 2322, 10880, 52466, 258564, 1296282, 6589728 }

OEIS:  \seqnum{A176006} [1] The number of branching configurations of RNA (see Sankoff, 1985) with n or fewer hairpins.
\vspace{\medskipamount}

\goodbreak{\bf  ACEW }\quad $\{\piece 1234, \piece 1324, \piece 1423, \piece 4213\}$
\quad See also 
{\bf AEQW}
\vspace{\smallskipamount}

{Seq= 4, 11, 34, 106, 325, 978, 2896, 8463, 24466, 70102, 199369 }

{Var= 3, 10, 33, 105, 324, 977, 2895, 8462, 24465, 70101, 199368 }

OEIS:  \seqnum{A027989} [1] a(n) = self-convolution of row n of array T given by A027926.
\vspace{\medskipamount}

\goodbreak{\bf  ACFL }\quad $\{\piece 1234, \piece 1324, \piece 1432, \piece 2413\}$
\vspace{\smallskipamount}

{Seq= 4, 9, 17, 28, 42, 59, 79, 102, 128, 157, 189 }

{Var= 0, 5, 13, 24, 38, 55, 75, 98, 124, 153, 185 }

OEIS:  \seqnum{A140090} [1] n(3n+7)/2.
\vspace{\medskipamount}

\goodbreak{\bf  ACFM }\quad $\{\piece 1234, \piece 1324, \piece 1432, \piece 2431\}$
\quad See also 
{\bf AFQZ}
\vspace{\smallskipamount}

{Seq= 4, 10, 26, 68, 178, 466, 1220, 3194, 8362, 21892, 57314 }

OEIS:  \seqnum{A052995} [3] A simple regular expression.
\vspace{\medskipamount}

\goodbreak{\bf  ACLS }\quad $\{\piece 1234, \piece 1324, \piece 2413, \piece 3412\}$
\quad See also 
{\bf ARTX}, {\bf BCEM}, {\bf BDET}, {\bf BDFL}
\vspace{\smallskipamount}

{Seq= 4, 12, 40, 140, 504, 1848, 6864, 25740, 97240, 369512, 1410864 }

OEIS:  \seqnum{A100320} [2] A Catalan transform of (1+2x)/(1-2x).
\vspace{\medskipamount}

\goodbreak{\bf  ACMT }\quad $\{\piece 1234, \piece 1324, \piece 2431, \piece 3421\}$
\vspace{\smallskipamount}

{Seq= 4, 12, 46, 198, 904, 4280, 20794, 103050, 518860, 2646724, 13648870 }

{Var= 3, 11, 45, 197, 903, 4279, 20793, 103049, 518859, 2646723, 13648869 }

OEIS:  \seqnum{A001003} [2] Schroeder's second problem (generalized parentheses); also called super-Catalan numbers or little Schroeder numbers.
\vspace{\medskipamount}

\goodbreak{\bf  ACXZ }\quad $\{\piece 1234, \piece 1324, \piece 4231, \piece 4321\}$
\quad See also 
{\bf DEFK}, {\bf DEKM}, {\bf DFKM}
\vspace{\smallskipamount}

{Seq= 4, 8, 16, 32, 64, 128, 256, 512, 1024, 2048, 4096 }

OEIS:  \seqnum{A000079} [30] Powers of 2: a(n) = 2\^{}n.
\vspace{\medskipamount}

\goodbreak{\bf  ADHN }\quad $\{\piece 1234, \piece 1342, \piece 2143, \piece 3124\}$
\vspace{\smallskipamount}

{Seq= 4, 42, 816, 25520, 1170240, 73992912, 6169370368, 655847011584 }

{Var= 2, 14, 204, 5104, 195040, 10570416, 771171296, 72871890176, 8658173200896 }

OEIS:  \seqnum{A122647} [1] Number of permutations of length 2n-1 with no local maxima or minima in even positions.
\vspace{\medskipamount}

\goodbreak{\bf  AEFJ }\quad $\{\piece 1234, \piece 1423, \piece 1432, \piece 2314\}$
\vspace{\smallskipamount}

{Seq= 4, 8, 13, 19, 26, 34, 43, 53, 64, 76, 89 }

OEIS:  \seqnum{A034856} [1] C(n + 1, 2) + n - 1.
\vspace{\medskipamount}

\goodbreak{\bf  AEFL }\quad $\{\piece 1234, \piece 1423, \piece 1432, \piece 2413\}$
\quad See also 
{\bf AELQ}
\vspace{\smallskipamount}

{Seq= 4, 10, 19, 31, 46, 64, 85, 109, 136, 166, 199 }

OEIS:  \seqnum{A005448} [1] Centered triangular numbers: 3n(n-1)/2 + 1.
\vspace{\medskipamount}

\goodbreak{\bf  AEFQ }\quad $\{\piece 1234, \piece 1423, \piece 1432, \piece 3214\}$
\quad See also 
{\bf AEJL}
\vspace{\smallskipamount}

{Seq= 4, 9, 16, 25, 36, 49, 64, 81, 100, 121, 144 }

OEIS:  \seqnum{A000290} [9] The squares: a(n) = n\^{}2.
\vspace{\medskipamount}

\goodbreak{\bf  AEFS }\quad $\{\piece 1234, \piece 1423, \piece 1432, \piece 3412\}$
\quad See also 
{\bf AEQS}
\vspace{\smallskipamount}

{Seq= 4, 12, 46, 188, 774, 3174, 12954, 52668, 213526, 863822, 3488874 }

{Var= 2, 10, 44, 186, 772, 3172, 12952, 52666, 213524, 863820, 3488872 }

OEIS:  \seqnum{A068551} [1] 4\^{}n - binomial(2n,n).
\vspace{\medskipamount}

\goodbreak{\bf  AEFZ }\quad $\{\piece 1234, \piece 1423, \piece 1432, \piece 4321\}$
\quad See also 
{\bf AEQZ}
\vspace{\smallskipamount}

{Seq= 4, 11, 31, 85, 228, 604, 1590, 4173, 10937, 28647, 75014 }

{Var= 2, 9, 29, 83, 226, 602, 1588, 4171, 10935, 28645, 75012 }

OEIS:  \seqnum{A152891} [1] a=b=0;b(n)=b+n+a;a(n)=a+n+b.
\vspace{\medskipamount}

\goodbreak{\bf  AEJS }\quad $\{\piece 1234, \piece 1423, \piece 2314, \piece 3412\}$
\vspace{\smallskipamount}

{Seq= 4, 10, 32, 114, 422, 1586, 6008, 22882, 87518, 335922, 1293294 }

{Var= 2, 8, 30, 112, 420, 1584, 6006, 22880, 87516, 335920, 1293292 }

OEIS:  \seqnum{A162551} [1] 2 * C(2n,n-1).
\vspace{\medskipamount}

\goodbreak{\bf  AELM }\quad $\{\piece 1234, \piece 1423, \piece 2413, \piece 2431\}$
\vspace{\smallskipamount}

{Seq= 4, 10, 28, 82, 242, 710, 2064, 5946, 16992, 48222, 136034 }

{Var= 2, 5, 14, 41, 121, 355, 1032, 2973, 8496, 24111, 68017 }

OEIS:  \seqnum{A116845} [1] Number of permutations of length n which avoid the patterns 231, 12534.
\vspace{\medskipamount}

\goodbreak{\bf  AELW }\quad $\{\piece 1234, \piece 1423, \piece 2413, \piece 4213\}$
\vspace{\smallskipamount}

{Seq= 4, 12, 40, 132, 422, 1310, 3972, 11824, 34692, 100612, 289034 }

{Var= 3, 11, 39, 131, 421, 1309, 3971, 11823, 34691, 100611, 289033 }

OEIS:  \seqnum{A166336} [1] Expansion of (1-4x+7x\^{}2-4x\^{}3+x\^{}4)/(1-7x+17x\^{}2-17x\^{}3+7x\^{}4-x\^{}5)
\vspace{\medskipamount}

\goodbreak{\bf  AFLQ }\quad $\{\piece 1234, \piece 1432, \piece 2413, \piece 3214\}$
\quad See also 
{\bf AKPR}, {\bf AKPV}, {\bf APRX}, {\bf CDJQ}, {\bf CDLQ}, {\bf CEMQ}, {\bf CFMQ}, {\bf DEJQ}, {\bf DFJQ}
\vspace{\smallskipamount}

{Seq= 4, 11, 22, 37, 56, 79, 106, 137, 172, 211, 254 }

OEIS:  \seqnum{A084849} [1] 1+n+2n\^{}2.
\vspace{\medskipamount}

\goodbreak{\bf  AKMP }\quad $\{\piece 1234, \piece 2341, \piece 2431, \piece 3142\}$
\quad See also 
{\bf AMPX}
\vspace{\smallskipamount}

{Seq= 4, 13, 56, 241, 1000, 4061, 16336, 65473, 262064, 1048477, 4194184 }

{Var= 3, 12, 55, 240, 999, 4060, 16335, 65472, 262063, 1048476, 4194183 }

OEIS:  \seqnum{A024038} [1] 4\^{}n-n\^{}2.
\vspace{\medskipamount}

\goodbreak{\bf  AKMT }\quad $\{\piece 1234, \piece 2341, \piece 2431, \piece 3421\}$
\quad See also 
{\bf BDFM}
\vspace{\smallskipamount}

{Seq= 4, 9, 23, 65, 197, 626, 2056, 6918, 23714, 82500, 290512 }

OEIS:  \seqnum{A014137} [1] Partial sums of Catalan numbers (A000108).
\vspace{\medskipamount}

\goodbreak{\bf  AKMV }\quad $\{\piece 1234, \piece 2341, \piece 2431, \piece 4132\}$
\quad See also 
{\bf AKRS}, {\bf AKSV}, {\bf AMVX}, {\bf CDEW}, {\bf CDJW}, {\bf CDLW}, {\bf DEJW}
\vspace{\smallskipamount}

{Seq= 4, 10, 34, 130, 514, 2050, 8194, 32770, 131074, 524290, 2097154 }

{Var= 2, 5, 17, 65, 257, 1025, 4097, 16385, 65537, 262145, 1048577 }

OEIS:  \seqnum{A052539} [2] 4\^{}n + 1.
\vspace{\medskipamount}

\goodbreak{\bf  AKPU }\quad $\{\piece 1234, \piece 2341, \piece 3142, \piece 4123\}$
\quad See also 
{\bf AKPX}, {\bf CDEQ}, {\bf CDFQ}, {\bf DEFQ}
\vspace{\smallskipamount}

{Seq= 4, 9, 18, 31, 48, 69, 94, 123, 156, 193, 234 }

OEIS:  \seqnum{A100037} [1] Positions of occurrences of the natural numbers as second subsequence in A100035.
\vspace{\medskipamount}

\goodbreak{\bf  AKRV }\quad $\{\piece 1234, \piece 2341, \piece 3241, \piece 4132\}$
\quad See also 
{\bf ARVX}, {\bf CDJL}, {\bf CEJM}, {\bf CEQW}
\vspace{\smallskipamount}

{Seq= 4, 8, 12, 16, 20, 24, 28, 32, 36, 40, 44 }

OEIS:  \seqnum{A008586} [6] Multiples of 4.
\vspace{\medskipamount}

\goodbreak{\bf  AKRW }\quad $\{\piece 1234, \piece 2341, \piece 3241, \piece 4213\}$
\quad See also 
{\bf AMPR}, {\bf APRS}
\vspace{\smallskipamount}

{Seq= 4, 15, 62, 253, 1020, 4091, 16378, 65529, 262136, 1048567, 4194294 }

{Var= 3, 14, 61, 252, 1019, 4090, 16377, 65528, 262135, 1048566, 4194293 }

OEIS:  \seqnum{A024037} [1] 4\^{}n-n.
\vspace{\medskipamount}

\goodbreak{\bf  AKUX }\quad $\{\piece 1234, \piece 2341, \piece 4123, \piece 4231\}$
\quad See also 
{\bf CDEF}
\vspace{\smallskipamount}

{Seq= 4, 4, 4, 4, 4, 4, 4, 4, 4, 4, 4 }

OEIS:  \seqnum{A000523} [213] Log\_2(n) rounded down.
\vspace{\medskipamount}

\goodbreak{\bf  AKWY }\quad $\{\piece 1234, \piece 2341, \piece 4213, \piece 4312\}$
\quad See also 
{\bf BDQW}
\vspace{\smallskipamount}

{Seq= 4, 16, 92, 604, 4214, 30538, 227476, 1730788, 13393690, 105089230, 834086422 }

{Var= 3, 15, 91, 603, 4213, 30537, 227475, 1730787, 13393689, 105089229, 834086421 }

OEIS:  \seqnum{A099251} [1] Bisection of A005043.
\vspace{\medskipamount}

\goodbreak{\bf  APRV }\quad $\{\piece 1234, \piece 3142, \piece 3241, \piece 4132\}$
\quad See also 
{\bf CEFW}
\vspace{\smallskipamount}

{Seq= 4, 13, 26, 43, 64, 89, 118, 151, 188, 229, 274 }

OEIS:  \seqnum{A091823} [1] a(n) = 2*n\^{}2 + 3*n - 1.
\vspace{\medskipamount}

\goodbreak{\bf  APSX }\quad $\{\piece 1234, \piece 3142, \piece 3412, \piece 4231\}$
\quad See also 
{\bf DFQW}
\vspace{\smallskipamount}

{Seq= 4, 16, 64, 256, 1024, 4096, 16384, 65536, 262144, 1048576, 4194304 }

OEIS:  \seqnum{A000302} [2] Powers of 4.
\vspace{\medskipamount}

\goodbreak{\bf  ARSV }\quad $\{\piece 1234, \piece 3241, \piece 3412, \piece 4132\}$
\vspace{\smallskipamount}

{Seq= 4, 14, 60, 250, 1016, 4086, 16372, 65522, 262128, 1048558, 4194284 }

{Var= 2, 12, 58, 248, 1014, 4084, 16370, 65520, 262126, 1048556, 4194282 }

OEIS:  \seqnum{A100103} [1] 2\^{}(2*n)-(2*n).
\vspace{\medskipamount}

\goodbreak{\bf  BCEG }\quad $\{\piece 1243, \piece 1324, \piece 1423, \piece 2134\}$
\qquad See also 
{\bf BEGJ}, {\bf BEGY}, {\bf BEJV}, {\bf BEVY}, {\bf BJTV}, {\bf EJRV}
\vspace{\smallskipamount}

{Seq= 4, 24, 272, 4960, 132672, 4893056, 237969664, 14756156928, 1136284574720 }

{Var= 1, 3, 17, 155, 2073, 38227, 929569, 28820619, 1109652905, 51943281731 }

OEIS:  \seqnum{A110501} [2] Unsigned Genocchi numbers (of first kind) of even index.
\vspace{\medskipamount}

\goodbreak{\bf  BCEK }\quad $\{\piece 1243, \piece 1324, \piece 1423, \piece 2341\}$
\vspace{\smallskipamount}

{Seq= 4, 14, 62, 324, 1936, 12962, 95786, 772196, 6729124, 62920648, 627487330 }

{Var= 2, 7, 31, 162, 968, 6481, 47893, 386098, 3364562, 31460324, 313743665 }

OEIS:  \seqnum{A125275} [1] Eigensequence of triangle A039599: a(n) = Sum\_\{k=0..n-1\} A039599(n-1,k)*a(k) for n$>$0 with a(0)=1.
\vspace{\medskipamount}

\goodbreak{\bf  BDEF }\quad $\{\piece 1243, \piece 1342, \piece 1423, \piece 1432\}$
\quad See also 
{\bf BDEM}, {\bf BDFT}, {\bf BEFK}, {\bf BEKM}, {\bf BFKT}, {\bf DEFR}, {\bf DEMR}, {\bf EFKR}
\vspace{\smallskipamount}

{Seq= 4, 10, 28, 84, 264, 858, 2860, 9724, 33592, 117572, 416024 }

OEIS:  \seqnum{A068875} [2] Expansion of (1+x*C)*C, where C = (1-(1-4*x)\^{}(1/2))/(2*x) is g.f. for Catalan numbers, A000108.
\vspace{\medskipamount}

\goodbreak{\bf  BDEK }\quad $\{\piece 1243, \piece 1342, \piece 1423, \piece 2341\}$
\vspace{\smallskipamount}

{Seq= 4, 12, 38, 126, 430, 1498, 5300, 18980, 68636, 250208, 918304 }

{Var= 2, 6, 19, 63, 215, 749, 2650, 9490, 34318, 125104, 459152 }

OEIS:  \seqnum{A109262} [1] A Catalan transform of the Fibonacci numbers.
\vspace{\medskipamount}

\goodbreak{\bf  BDFK }\quad $\{\piece 1243, \piece 1342, \piece 1432, \piece 2341\}$
\vspace{\smallskipamount}

{Seq= 4, 11, 32, 99, 318, 1051, 3550, 12200, 42520, 149930, 533890 }

OEIS:  \seqnum{A135339} [1] Number of Dyck paths of semilength n having no DUDU's starting at level 1.
\vspace{\medskipamount}

\goodbreak{\bf  BEFM }\quad $\{\piece 1243, \piece 1423, \piece 1432, \piece 2431\}$
\vspace{\smallskipamount}

{Seq= 4, 8, 18, 46, 130, 394, 1252, 4112, 13836, 47428, 165000 }

{Var= 2, 4, 9, 23, 65, 197, 626, 2056, 6918, 23714, 82500 }

OEIS:  \seqnum{A014137} [1] Partial sums of Catalan numbers (A000108).
\vspace{\medskipamount}

\goodbreak{\bf  BEFT }\quad $\{\piece 1243, \piece 1423, \piece 1432, \piece 3421\}$
\vspace{\smallskipamount}

{Seq= 4, 10, 30, 98, 336, 1188, 4290, 15730, 58344, 218348, 823004 }

{Var= 1, 2, 5, 14, 42, 132, 429, 1430, 4862, 16796, 58786 }

OEIS:  \seqnum{A000108} [5] Catalan numbers: C(n) = binomial(2n,n)/(n+1) = (2n)!/(n!(n+1)!). Also called Segner numbers.
\vspace{\medskipamount}

\goodbreak{\bf  BERT }\quad $\{\piece 1243, \piece 1423, \piece 3241, \piece 3421\}$
\vspace{\smallskipamount}

{Seq= 4, 16, 80, 448, 2688, 16896, 109824, 732160, 4978688, 34398208, 240787456 }

OEIS:  \seqnum{A025225} [2] a(n) = a(1)*a(n-1) + a(2)*a(n-2) + ...+ a(n-1)*a(1) for n $\geq$ 2. Also a(n) = (2\^{}n)*C(n-1), where C = A000108 (Catalan numbers).
\vspace{\medskipamount}

\goodbreak{\bf  BFKM }\quad $\{\piece 1243, \piece 1432, \piece 2341, \piece 2431\}$
\vspace{\smallskipamount}

{Seq= 4, 9, 22, 58, 163, 483, 1494, 4783, 15740, 52956, 181391 }

OEIS:  \seqnum{A059019} [1] Number of Dyck paths of semilength n with no peak at height 3.
\vspace{\medskipamount}

\goodbreak{\bf  BFLM }\quad $\{\piece 1243, \piece 1432, \piece 2413, \piece 2431\}$
\vspace{\smallskipamount}

{Seq= 4, 8, 20, 60, 200, 704, 2552, 9416, 35156, 132396, 501908 }

{Var= 1, 2, 5, 15, 50, 176, 638, 2354, 8789, 33099, 125477 }

OEIS:  \seqnum{A024718} [1] (1/2)*(1 + sum of C(2k,k)) for k = 0,1,2,...,n.
\vspace{\medskipamount}

\goodbreak{\bf  CDEK }\quad $\{\piece 1324, \piece 1342, \piece 1423, \piece 2341\}$
\quad See also 
{\bf CDFK}
\vspace{\smallskipamount}

{Seq= 4, 10, 32, 106, 412, 1634, 7240, 32722, 160436, 803002, 4279024 }

{Var= 2, 5, 16, 53, 206, 817, 3620, 16361, 80218, 401501, 2139512 }

OEIS:  \seqnum{A081126} [1] Binomial transform of n!/floor(n/2)!.
\vspace{\medskipamount}

\goodbreak{\bf  CDFW }\quad $\{\piece 1324, \piece 1342, \piece 1432, \piece 4213\}$
\quad See also 
{\bf CDQW}, {\bf DEFW}, {\bf DEQW}
\vspace{\smallskipamount}

{Seq= 4, 13, 49, 193, 769, 3073, 12289, 49153, 196609, 786433, 3145729 }

OEIS:  \seqnum{A140660} [1] 3*4\^{}n+1.
\vspace{\medskipamount}

\goodbreak{\bf  CDJM }\quad $\{\piece 1324, \piece 1342, \piece 2314, \piece 2431\}$
\vspace{\smallskipamount}

{Seq= 4, 12, 30, 62, 112, 184, 282, 410, 572, 772, 1014 }

{Var= 2, 6, 15, 31, 56, 92, 141, 205, 286, 386, 507 }

OEIS:  \seqnum{A056520} [1] (n+2)*(2*n\^{}2-n+3)/6
\vspace{\medskipamount}

\goodbreak{\bf  CDKM }\quad $\{\piece 1324, \piece 1342, \piece 2341, \piece 2431\}$
\vspace{\smallskipamount}

{Seq= 4, 12, 40, 152, 624, 2768, 13024, 64800, 337984, 1842368, 10444416 }

{Var= 2, 6, 20, 76, 312, 1384, 6512, 32400, 168992, 921184, 5222208 }

OEIS:  \seqnum{A000898} [1] a(n) = 2(a(n-1) + (n-1)a(n-2)).
\vspace{\medskipamount}

\goodbreak{\bf  CDLM }\quad $\{\piece 1324, \piece 1342, \piece 2413, \piece 2431\}$
\vspace{\smallskipamount}

{Seq= 4, 10, 20, 34, 52, 74, 100, 130, 164, 202, 244 }

OEIS:  \seqnum{A005893} [1] Number of points on surface of tetrahedron: 2n\^{}2 + 2 (coordination sequence for sodalite net) for n$>$0.
\vspace{\medskipamount}

\goodbreak{\bf  CMPW }\quad $\{\piece 1324, \piece 2431, \piece 3142, \piece 4213\}$
\vspace{\smallskipamount}

{Seq= 4, 36, 568, 14560, 546492, 28289184, 1930982576, 168054225408, 18162775533620 }

{Var= 2, 18, 284, 7280, 273246, 14144592, 965491288, 84027112704, 9081387766810 }

OEIS:  \seqnum{A131455} [1] Number of inequivalent properly oriented and labeled planar chord diagrams whose associated planar tree is a path on n+1 ver...
\vspace{\medskipamount}

\goodbreak{\bf  DEJM }\quad $\{\piece 1342, \piece 1423, \piece 2314, \piece 2431\}$
\quad See also 
{\bf DFJM}
\vspace{\smallskipamount}

{Seq= 4, 10, 24, 50, 92, 154, 240, 354, 500, 682, 904 }

{Var= 2, 5, 12, 25, 46, 77, 120, 177, 250, 341, 452 }

OEIS:  \seqnum{A116731} [1] Number of permutations of length n which avoid the patterns 321, 2143, 3124; or avoid the patterns 132, 2314, 4312, etc.
}



\bigskip
\hrule
\bigskip

\noindent 2010 {\it Mathematics Subject Classification}:
Primary 05A15, 05A19, 05A05; Secondary 05B50.


\noindent \emph{Keywords: } standard puzzles,   integer sequences,
	Fibonacci numbers, tangent numbers, Catalan numbers.

\bigskip
\hrule
\bigskip

\noindent {\small (Concerned with sequences
\seqnum{A000012}, \seqnum{A000108}, \seqnum{A121373}, \seqnum{A000165}, \seqnum{A055642}, \seqnum{A125273}, \seqnum{A000027}, \seqnum{A000124},
\seqnum{A002522}, \seqnum{A130883}, \seqnum{A005843}, \seqnum{A000108}, \seqnum{A068875}, \seqnum{A005807}, \seqnum{A117513}, \seqnum{A121686},
\seqnum{A028329}, \seqnum{A052676}, \seqnum{A001147}, \seqnum{A128135}, \seqnum{A000027}, \seqnum{A022856}, \seqnum{A059100}, \seqnum{A032908},
\seqnum{A115112}, \seqnum{A104858}, \seqnum{A000045}, \seqnum{A000217}, \seqnum{A005408}, \seqnum{A002061}, \seqnum{A035508}, \seqnum{A001791},
\seqnum{A054444}, \seqnum{A114185}, \seqnum{A104249}, \seqnum{A008549}, \seqnum{A033816}, \seqnum{A155587}, \seqnum{A010701}, \seqnum{A084508},
\seqnum{A124133}, \seqnum{A014105}, \seqnum{A038665}, \seqnum{A004767}, \seqnum{A000245}, \seqnum{A005807}, \seqnum{A000782}, \seqnum{A001700},
\seqnum{A081293}, \seqnum{A078718}, \seqnum{A120589}, \seqnum{A079309}, \seqnum{A178789}, \seqnum{A179903}, \seqnum{A029744}, \seqnum{A051577},
\seqnum{A032184}, \seqnum{A097801}, \seqnum{A010844}, \seqnum{A002866}, \seqnum{A051577}, \seqnum{A004400}, \seqnum{A052676}, \seqnum{A125273},
\seqnum{A000124}, \seqnum{A005843}, \seqnum{A014206}, \seqnum{A055588}, \seqnum{A112849}, \seqnum{A176006}, \seqnum{A027989}, \seqnum{A140090},
\seqnum{A052995}, \seqnum{A100320}, \seqnum{A001003}, \seqnum{A000079}, \seqnum{A122647}, \seqnum{A034856}, \seqnum{A005448}, \seqnum{A000290},
\seqnum{A068551}, \seqnum{A152891}, \seqnum{A162551}, \seqnum{A116845}, \seqnum{A166336}, \seqnum{A084849}, \seqnum{A024038}, \seqnum{A014137},
\seqnum{A052539}, \seqnum{A100037}, \seqnum{A008586}, \seqnum{A024037}, \seqnum{A000523}, \seqnum{A099251}, \seqnum{A091823}, \seqnum{A000302},
\seqnum{A100103}, \seqnum{A110501}, \seqnum{A125275}, \seqnum{A068875}, \seqnum{A109262}, \seqnum{A135339}, \seqnum{A014137}, \seqnum{A000108},
\seqnum{A025225}, \seqnum{A059019}, \seqnum{A024718}, \seqnum{A081126}, \seqnum{A140660}, \seqnum{A056520}, \seqnum{A000898}, \seqnum{A005893},
\seqnum{A131455} and \seqnum{A116731}.)
}

\bigskip
\hrule
\bigskip

\end{document}